\documentclass[11pt]{amsart}

\makeatletter
\@namedef{subjclassname@2020}{\emph{2020} Mathematics Subject Classification}
\makeatother

\usepackage[margin=1in]{geometry} 
\usepackage{amsmath, amsthm, amsfonts, amssymb} 
\usepackage{mathrsfs} 
\usepackage{enumerate} 
\usepackage{xcolor} 
\usepackage{bbm} 
\usepackage{hyperref} 
\usepackage{bm} 
\usepackage{tikz} 
\usetikzlibrary{cd} 
\tikzcdset{scale cd/.style={every label/.append style={scale=#1},
    cells={nodes={scale=#1}}}}

\usepackage{mdframed}

\usepackage{dsfont}

\usepackage{chngcntr}
\counterwithin{equation}{section} 

\usepackage{thmtools, thm-restate} 

\theoremstyle{theorem}
	\newtheorem{theorem}{Theorem}[section]
	
	\newtheorem{proposition}[theorem]{Proposition}
	
	\newtheorem{premise}[theorem]{Premise}
\theoremstyle{definition}
    \newtheorem{definition}[theorem]{Definition}
	\newtheorem{remark}[theorem]{Remark}
    \newtheorem{example}[theorem]{Example}

\newcommand{\N}{\mathbb{N}}
\newcommand{\Z}{\mathbb{Z}}
\newcommand{\Q}{\mathbb{Q}}
\newcommand{\R}{\mathbb{R}}
\newcommand{\C}{\mathbb{C}}
\newcommand{\T}{\mathbb{T}}

\newcommand{\F}{\mathbb{F}}

\newcommand{\mZ}{\mathcal{Z}}
\newcommand{\mQ}{\mathcal{Q}}
\newcommand{\mR}{\mathcal{R}}
\newcommand{\mT}{\mathcal{T}}

\newcommand{\mS}{\mathcal{S}}

\newcommand{\mF}{\mathcal{F}}

\newcommand{\mO}{\mathcal{O}}

\newcommand{\mH}{\mathcal{H}}

\newcommand{\eps}{\varepsilon}

\newcommand{\es}{\emptyset}

\renewcommand{\tilde}{\widetilde}

\title{A note on polynomial equidistribution and recurrence in finite characteristic}

\author{Ethan Ackelsberg}
\address{Institute of Mathematics, \'{E}cole Polytechnique F\'{e}d\'{e}rale de Lausanne (EPFL), Lausanne, CH-1015}
\email{ethan.ackelsberg@epfl.ch}

\author{Vitaly Bergelson}
\address{Department of Mathematics, Ohio State University, Columbus, OH 43210}
\email{vitaly@math.ohio-state.edu}

\date{\today}

\keywords{}

\subjclass[2020]{11K36 (Primary) 11J71, 11T55, 05D10 (Secondary)}

\begin{document}

\maketitle

\begin{abstract}
    This paper addresses the topic of equidistribution and recurrence for polynomial sequences over function fields.
	The main focus is to note and correct two small errors in \cite{bl}, contextualized within the broader developing literature on number theory and additive combinatorics in function fields.
    Connected with the resolution of these issues, we also prove new results characterizing intersective polynomials in finite characteristic in terms of various algebraic, combinatorial, and dynamical properties.
\end{abstract}


\section{Introduction}

The goal of this paper is to clarify several facts related to uniform distribution of polynomial sequences and recurrence over function fields.
This paper was spurred by recent advances that have revealed two subtle errors in \cite{bl}:
\begin{enumerate}[(1)]
	\item	A misstatement in the final sentence of \cite[Theorem 0.3]{bl}, erroneously describing a condition on a polynomial $g$ as being both necessary and sufficient for the polynomial sequence $g(n)$ to be well distributed, when the condition is sufficient but not necessary. (The main content of the theorem expressed in the preceding sentences is unaffected by this error.)
	\item	An overly stringent condition used in describing \emph{intersective polynomials} in the remark after \cite[Theorem 9.5]{bl} that leads to clashes with other proposed definitions in the literature based on various equivalent conditions for intersectivity of integer polynomials.
\end{enumerate}
These issues do not affect other results from \cite{bl}.
Nevertheless, recent developments related to (1) and (2) have led to some confusion about the validity of other related results, and finding corrections in the literature has until now required careful reading of footnotes and remarks scattered between different papers (the issue (1) was raised in \cite{cgllw} and addressed in \cite[Remark 2.2]{a}; a description and resolution of issue (2) appears in \cite[p. 3, footnote 3]{ab_fields} and \cite[Remark 4.15]{ab_rings}).
For these reasons, we have written this note to pinpoint exactly where errors occur within the text of \cite{bl} and to settle in a more definitive manner what the necessary corrections are.
We also produce some new results on closely connected topics.
For example, we show that the set of nonzero values of an intersective polynomial (when properly defined) in finite characteristic is a van der Corput set, resolving a conjecture from \cite{llw}.

We elaborate and resolve each issue in turn, addressing (1) in Section \ref{sec: equidistribution} and (2) in Section \ref{sec: intersective}.
In Section \ref{sec: intersective}, we have aimed to provide a rather comprehensive treatment of the topic of intersective polynomials in finite characteristic, and the final subsection (\ref{sec: additional consequences}) includes a proof of equivalences between several properties of algebraic, combinatorial, and dynamical nature that all characterize intersectivity of polynomials in finite characteristic.

For ease of readability, we include in Section \ref{sec: notation} a review of the notation and terminology introduced in \cite{bl}.
Included within Subsection \ref{sec: polynomial sequences} is a small correction to the utilization of \emph{derivational degree} in \cite{bl}.


\section*{Acknowledgements}

EA is supported by the Swiss National Science Foundation under grant TMSGI2-211214.
The present text benefited greatly from the input of Sasha Leibman, whose feedback led to simplifications in Example \ref{eg: finite index subtorus} and the formulations of Theorems \ref{thm: correct iff well distributed} and \ref{thm: correct iff T^c}.
We also thank J\'{e}r\'{e}my Champagne, Zhenchao Ge, Th\'{a}i Ho\`{a}ng L\^{e}, Yu-Ru Liu, and Trevor D. Wooley for comments on an earlier draft that led to expanded remarks related to Theorem \ref{thm: corrected main} and the discussion of derivational degree in Subsection \ref{sec: polynomial sequences}.


\section{Notation and terminology} \label{sec: notation}

In this section, we collect notation and terminology from \cite{bl} that will be needed in the sequel.


\subsection{Basic algebraic objects in finite characteristic}

The goal of \cite{bl} is to establish finite characteristic analogues of classical results of Weyl \cite{weyl} about the distribution of polynomial sequences mod 1.
One of the first steps in formulating analogies is the development of a glossary for translating between the two settings.
In correspondence with the classical algebraic objects $\Z$ (the ring of integers), $\Q$ (the field of rational numbers), and $\R$ (the field of real numbers), we write $\mZ$ for the ring $\F[t]$ of polynomials over a finite field $\F$, $\mQ$ for the field of rational functions $\F(t)$, and $\mR$ for the field of formal Laurent series $\F((t^{-1}))$, which is the completion of $\mQ$ with respect to the metric induced by the absolute value
\begin{equation*}
	\left| \frac{u}{v} \right| = |\F|^{\deg{u} - \deg{v}}.
\end{equation*}
Finally, in analogy with the traditional $\T$ used to denote the compact group $\R/\Z$, we write $\mT$ for the compact quotient group $\mR/\mZ$.

Whereas Weyl's polynomial equidistribution theorem concerns the distribution of values of a polynomial $g(n) \in \R[n]$ taken mod 1 as $n$ runs through the integers, the subject of \cite{bl} is the distribution of values of a polynomial $\mZ$-sequence $g(n) \in \mR[n]$ viewed mod $\mZ$.
(A $\mZ$-sequence is a sequence indexed by $\mZ$.)

The integers $\Z$ and the torus $\T$ have a deep relationship expressed by their duality (in the sense of Pontryagin).
The function $e(x) = e^{2\pi i x}$ usefully displays the duality between $\Z$ and $\T$ via the pairing $\langle n,x \rangle = e(nx)$: this pairing is a group homomorphism in each variable, every character on $\Z$ can be expressed uniquely as $\langle \cdot, x \rangle$ for some $x \in \T$, and every character on $\T$ can be expressed uniquely as $\langle n, \cdot \rangle$ for some $n \in \Z$.
The finite characteristic objects $\mZ$ and $\mT$ possess an analogous duality.
For $x = \sum_{i=-\infty}^r x_i t^i \in \mR$, we write
\begin{equation*}
	e(x) = e^{2\pi i \textup{Tr}_{\F/\F_p}(x_{-1})/p},
\end{equation*}
where $p$ is the characteristic of $\F$ and $\textup{Tr}_{\F/\F_p}$ is the trace map induced by viewing $\F$ as a field extension of its prime subfield $\F_p$.
(If $|\F| = p^k$, the trace can be written explicitly as $\textup{Tr}_{\F/\F_p}(x) = x + x^p + \ldots + x^{p^{k-1}}$.)
Then $\langle n, x \rangle = e(nx)$ represents characters on $\mZ$ and $\mT$ in its different variables in a manner directly analogous to the duality between $\Z$ and $\T$ described above.
While the reader may at this point be distressed by the two distinct meanings of $e(\cdot)$ presented in this paragraph, we assuage this valid concern by noting that $e(\cdot)$ will from now on only be used to denote the function defined on $\mR$ in the finite characteristic setting.


\subsection{Well distribution}

As mentioned above, we are interested in the distribution of values of polynomial sequences in tori $\mT^c$.
To discuss ``distribution'' in precise mathematical terms, we would like to determine the measures arising as limits of certain averages over terms of a sequence $a(n) \in \mT^c$, $n \in \mZ$.
The relevant averaging method comes from the notion of a F{\o}lner sequence.
A \emph{F{\o}lner sequence} in $\mZ$ is a sequence of finite subsets $(\Phi_N)_{N \in \N}$ of $\mZ$ satifsying the asymptotic invariance property
\begin{equation*}
	\lim_{N \to \infty} \frac{\left| (\Phi_N+n) \triangle \Phi_N \right|}{|\Phi_N|} = 0
\end{equation*}
for every $n \in \mZ$.
For example, the sequence of sets $\Phi_N = \{n \in \mZ : \deg{n} \le N\}$ forms a F{\o}lner sequence in $\mZ$.
We say that a $\mZ$-sequence $a(n) \in \mT^c$, $n \in \mZ$, is \emph{well distributed} with respect to a Borel probability measure $\mu$ on $\mT^c$ if for every continuous function $f : \mT^c \to \C$ and every F{\o}lner sequence $(\Phi_N)_{N \in \N}$ in $\mZ$, one has
\begin{equation} \label{eq: well distribution}
	\lim_{N \to \infty} \frac{1}{|\Phi_N|} \sum_{n \in \Phi_N} f(a(n)) = \int_{\mT^c} f~d\mu.
\end{equation}

The limiting measures that arise from polynomial sequences turn out to be supported on finitely many cosets of subgroups of $\mT^c$, and we introduce specific terminology adapted to this situation.
First, if $X \subseteq \mT^c$ is a closed subgroup and $\mu = \mu_X$ is the Haar measure on $X$, we say that a sequence $a(n)$, $n \in \mZ$, satisfying \eqref{eq: well distribution} is well distributed in $X$ rather than directly referring to the measure $\mu$.
More generally, if $X = X_0 + K$ is a finite union of cosets of a closed subgroup $X_0 \subseteq \mT^c$ and there exists $m \in \mZ$ such that for every $r \in \mZ$,
\begin{itemize}
	\item	$a(m\mZ+r) \subseteq X_0 + a(r)$, and
	\item	$a(mn+r)$, $n \in \mZ$, is well distributed in $X_0 + a(r)$,
\end{itemize}
then we say that $a(n)$, $n \in \mZ$, is \emph{well distributed in the components of $X$}.
We note that being well distributed in the components of $X$ is a weaker condition than being well distributed in $X$, as it allows for the different cosets $X_0 + k$, $k \in K$, to be assigned different ``weights.''
In other words, the limiting measure $\mu$ may take the more general form $\mu = \sum_{k \in K} c_k \mu_k$, where $\mu_k$ is the Haar measure on the coset $X_0 + k$ for $k \in K$ and $c_k \in (0,1]$ are rational coefficients with $\sum_{k \in K} c_k = 1$.


\subsection{Polynomial sequences} \label{sec: polynomial sequences}

Let $c \in \N$.
By a \emph{polynomial $\mZ$-sequence in $\mT^c$}, we mean a sequence $g(n)$, $n \in \mZ$, where $g(n) = (g_1(n), \ldots, g_c(n))$ and each $g_i$ is a polynomial with coefficients in $\mR$, reduced mod $\mZ$.
While a polynomial $g(x) \in \mR[x]$ can be represented in its usual form as
\begin{equation} \label{eq: standard representation}
	g(x) = \alpha_0 + \alpha_1 x + \ldots + \alpha_k x^k
\end{equation}
for some $k \in \N$ and coefficients $\alpha_i \in \mR$, this representation is not the most useful one for understanding the distribution of values $g(n)$, $n \in \mZ$.
Instead, we will consider representations making use of \emph{additive} and \emph{separable} polynomials.

A polynomial $\eta(x) \in \mR[x]$ is \emph{additive} if $\eta(x+y) = \eta(x) + \eta(y)$ for every $x,y \in \mR$.
Equivalently, additive polynomials are polynomials that can be expressed as linear combinations of the monomials $x^{p^j}$, $j \ge 0$.
A polynomial $g(x) \in \mR[x]$ is \emph{separable} if it is a linear combination of monomials $x^r$ with $p \nmid r$.
For each $k \in \N$, there is a unique representation $k = p^j r$ with $j \ge 0$ and $p \nmid r$.
Rewriting the monomial $x^k$ as $(x^r)^{p^j}$, we may therefore express a polynomial $g(x) \in \mR[x]$ in the form (uniquely up to rearrangement)
\begin{equation} \label{eq: additive/separable representation}
	g(x) = \alpha_0 + \eta_1(x^{r_1}) + \ldots + \eta_d(x^{r_d})
\end{equation}
for some additive polynomials $\eta_i$ and distinct separable monomials $x^{r_i}$.

In the multidimensional setup, we say that $\eta = (\eta_1, \ldots, \eta_c)$ is additive if each of its coordinates $\eta_i$ is an additive polynomial.
A general polynomial $g = (g_1, \ldots, g_c)$ can then also be represented in the form \eqref{eq: additive/separable representation}.

The phenomenon of additive polynomials points to a curious feature of polynomials in finite characteristic.
Namely, two familiar notions of degree from the zero characteristic setting no longer coincide.
On the one hand, a polynomial $g(x) \in \mR[x]$ has a \emph{degree}, which in the expression \eqref{eq: standard representation} is the number $k$ (assuming $\alpha_k \ne 0$).
On the other hand, $g$ has a \emph{derivational degree} that represents its complexity in terms of successive differences.
Polynomials of degree 0 are the nonzero constant polynomials.
We then define the derivational degree inductively by saying that a polynomial $g$ has derivational degree at most $d$ if for every $y \in \mR$, the polynomial $g(x+y) - g(x)$ has derivational degree at most $d-1$.
In particular, additive polynomials have derivational degree 1.
For a monomials $x^r$, if we expand $r = \sum_{j=0}^s c_j p^j$ in base $p$, then $x^r$ has derivational degree $\sum_{j=0}^s c_j$.
For a polynomial $g(x)$ as written in \eqref{eq: standard representation}, the derivational degree is the maximum of the derivational degrees of those monomials $x^r$ for which the coefficient $\alpha_r$ is nonzero.
Similarly, for a polynomial $g(x)$ as written in \eqref{eq: additive/separable representation}, the derivational degree is the maximum of the derivational degrees of those monomials $x^{r_i}$.

At the beginning of \cite[Section 8]{bl}, the derivational degree is defined in terms of \emph{formal derivatives} rather than successive differences.
However, the description of derivational degree following its introduction in \cite[Section 8]{bl} agrees with the derivational degree as defined above.
Moreover, the proof of \cite[Theorem 8.1]{bl}---which is carried out by induction using differences rather than formal derivatives---is correct once one interprets the derivational degree with the meaning given in the present paper.


\subsection{Subtori}

There is a correspondence between additive polynomial sequences $\eta(n)$ in $\mT^c$ and a certain class of subgroups (which we call \emph{subtori}) of $\mT^c$.
For motivation, we briefly review an analogous situation in the more familiar characteristic zero setting.
Given a linear polynomial $a(n) = (n\alpha_1, \ldots, n\alpha_c) \in \T^c$, $n \in \Z$, the orbit closure $\mO(a) = \overline{a(\Z)}$ is a closed subgroup of $\T^c$.
The subgroup $\mO(a)$ consists of finitely many connected components, each of which is a coset of a subgroup $\mF(a)$ isomorphic to $\T^b$ for some $0 \le b \le c$.
The closed connected subgroup $\mF(a)$ is what we refer to as a \emph{subtorus} of $\T^c$, and the sequence $a(n)$ has the following behavior: if $m \in \N$ denotes the number of connected components of $\mO(a)$, then
\begin{itemize}
	\item	$a(n) \bmod{\mF(a)}$ is $m$-periodic, and
	\item	for each $k \in \{0,1,\ldots,m-1\}$ and $l \in \N$, the sequence $a(lmn+k)$, $n \in \Z$, is dense (in fact, well-distributed) in $\mF(a) + a(k)$.
\end{itemize}
(This phenomenon is described by Weyl in \cite[Section 5]{weyl}, where he also proves a generalization for polynomial sequences.)

In the finite characteristic context, connectedness is no longer a meaningful criterion for distinguishing subtori from other subgroups, since the ``torus'' $\mT$ is a totally disconnected set.
Instead, subtori can be identified either through certain algebraic identities or by means of a decomposition of (additive) polynomial sequences into periodic and totally well-distributed components, as we will describe now.

As an algebraic entity, an \emph{$S$-subtorus} ($S$ for standard) of $\mT^c$ is the image of a torus $\mT^b$, $b \le c$, under a linear mapping $(x_1, \ldots, x_b) \mapsto \sum_{i=1}^b m_i x_i$ with $m_1, \ldots, m_b \in \mZ^c$.
If $\eta(n) = (n\alpha_1, \ldots, n\alpha_c) \in \mT^c$, $n \in \mZ$, is a linear polynomial sequence, then the closure $\mO(\eta) = \overline{\eta(\mZ)}$ is a finite union of cosets of an $S$-subtorus $\mS(\eta)$, and there exists $m \in \mZ$ such that
\begin{itemize}
	\item	$\eta(n) \bmod{\mS(\eta)}$ is $m$-periodic (i.e., $\eta(n+m) - \eta(n) \in \mS(\eta)$ for every $n \in \mZ$), and
	\item	for each $k \in \mZ$ with $\|k\| < \|m\|$ and $l \in \mZ \setminus \{0\}$, the $\mZ$-sequence $\eta(lmn+k)$, $n \in \mZ$, is dense (well-distributed) in $\mS(\eta) + \eta(k)$.
\end{itemize}
(The definition of an $S$-subtorus appears in \cite[Section 1]{bl}, and the relationship between linear polynomials and $S$-subtori is proved in \cite[Section 4]{bl}.)

A more general family of subtori is needed to capture the same type of behavior for additive polynomials of higher degree.
These are the $\Phi$-subtori, named in reference to the \emph{Frobenius endomorphism} $\Phi(x) = x^p$.
A \emph{$\Phi$-subtorus of level $l$} is the image in $\mT^c$ of a torus $\mT^b$ under an additive polynomial mapping $(x_1, \ldots, x_b) \mapsto \sum_{i=1}^b \sum_{j=0}^l m_{i,j} x_i^{p^j}$ with $m_{i,j} \in \mZ^c$.
For additive polynomial sequences $\eta(n) \in \mT^c$, $n \in \mZ$, the closure $\mO(\eta) = \overline{\eta(\mZ)}$ is now a finite union of cosets of a $\Phi$-subtorus $\mF(\eta)$, and there exists $m \in \mZ$ such that
\begin{itemize}
	\item	$\eta(n) \bmod{\mF(\eta)}$ is $m$-periodic, and
	\item	for each $k \in \mZ$ with $\|k\| < \|m\|$ and $l \in \mZ \setminus \{0\}$, the $\mZ$-sequence $\eta(lmn+k)$, $n \in \mZ$, is dense (well-distributed) in $\mF(\eta) + \eta(k)$.
\end{itemize}
(The definition of a $\Phi$-subtorus appears in \cite[Section 1]{bl}, and the connection between additive polynomials and $\Phi$-subtori is described in \cite[Section 7]{bl}.)


\section{Equidistribution theorem} \label{sec: equidistribution}

The main equidistribution theorem, \cite[Theorem 0.3]{bl}, contains a subtle mistake in the last sentence of its formulation.


\subsection{Corrected formulations of the equidistribution theorem}

A corrected formulation of \cite[Theorem 0.3]{bl} is the following.

\begin{theorem} \label{thm: corrected main}
	Any additive polynomial $\mZ$-sequence $\eta(n)$ in $\mT^c$ is well distributed in a set of the form $\mF(\eta) + \eta(K)$, where $\mF(\eta)$ is a $\Phi$-subtorus of level $\le \log_p \deg{\eta}$ of $\mT^c$ and $K$ is a finite subset of $\mZ$.
	For any polynomial $\mZ$-sequence $g(n) = \alpha_0 + \eta_1(n^{r_1}) + \ldots + \eta_d(n^{r_d})$, $n \in \mZ$, where $\alpha_0 \in \mT^c$, $\eta_1, \ldots, \eta_d$ are additive polynomial $\mZ$-sequences and $n^{r_1}, \ldots, n^{r_d}$ are distinct separable monomials, the closure $\mO(g) = \overline{g(\mZ)}$ of $g(\mZ)$ has the form $\mF(g) + g(K)$, where $\mF(g)$ is the $\Phi$-subtorus $\sum_{i=1}^d \mF(\eta_i)$ and $K$ is a finite subset of $\mZ$, and $g(n)$ is well distributed in the components $\mF(g) + g(k)$, $k \in K$, of $\mO(g)$.
	In particular, $g$ is well distributed in $\mT^c$ if $\mT^c = \sum_{i=1}^d \mF(\eta_i)$.
\end{theorem}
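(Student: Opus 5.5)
The plan is to reduce everything to Weyl's criterion on $\mT^c$ and the van der Corput differencing argument, with the algebraic input coming from the structure of additive polynomials and $\Phi$-subtori recalled in Section~\ref{sec: notation}. For the first sentence (additive $\eta$), I would take as given the description from \cite[Section 7]{bl} summarized above: $\mO(\eta)$ is a finite union of cosets of $\mF(\eta)$, $\eta(n) \bmod \mF(\eta)$ is $m$-periodic, and the subsequences $\eta(lmn+k)$ are well distributed in $\mF(\eta)+\eta(k)$. Since $\eta$ is additive, the cosets $\mF(\eta)+\eta(k)$ for $k$ ranging over residues mod $m$ all receive equal weight (each residue class has density $1/|\F|^{\deg m}$ along any F{\o}lner sequence). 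Moreover, $\eta(\mZ)+\mF(\eta)$ is a group, so the limit measure is Haar measure on $\mF(\eta)+\eta(K)$, with $K$ a set of residues. The level bound $\le \log_p\deg\eta$ follows because $\mF(\eta)$ is generated by images of $x\mapsto \sum_j m_j x^{p^j}$ with only the exponents $p^j\le\deg\eta$ appearing.

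For general $g$, I will first show $\mO(g)\subseteq \mF(g)+g(K)$ with $\mF(g)=\sum_i\mF(\eta_i)$. Choose a common period $m$ for all $\eta_i \bmod \mF(\eta_i)$. For $n\equiv k \pmod m$, each $n^{r_i}\equiv k^{r_i}\pmod m$, so $\eta_i(n^{r_i})-\eta_i(k^{r_i})\in\mF(\eta_i)$. Hence $g(n)-g(k)\in\mF(g)$, and $K$ can be taken to be residues mod $m$.

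The main step is well distribution of $g(mn+k)$ in $\mF(g)+g(k)$. By Weyl's criterion on the compact group $\mF(g)$ (after translating), it suffices to show that for every character $\chi(x)=e(\langle h,x\rangle)$, $h\in\mZ^c$, nontrivial on $\mF(g)$, the averages of $\chi(g(mn+k))$ vanish along every F{\o}lner sequence. Expanding $g(mn+k)$ in the additive/separable form in $n$, each monomial $(mn+k)^{r_i}$ contributes the top separable term $m^{r_i}n^{r_i}$ plus lower-order terms. Then $\chi\circ g(mn+k)$ becomes $e$ of a polynomial $\sum_i h\cdot\eta_i(m^{r_i}n^{r_i})+\text{(lower)}$. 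Nontriviality of $\chi$ on $\mF(g)$ means some $\chi|_{\mF(\eta_i)}\ne 1$. For that $i$, the additive polynomial $x\mapsto h\cdot\eta_i(m^{r_i}x)$ is not $\mZ$-periodic-trivial, i.e.\ some coefficient is irrational in the appropriate sense. Choosing $i$ with $r_i$ of maximal derivational degree among such indices, I would invoke the finite-characteristic Weyl theorem \cite[Theorem 8.1]{bl} (with derivational degree as corrected above). Proved by van der Corput differencing, it gives vanishing averages of $e(P(n))$ whenever the top-derivational-degree separable component has an irrational additive coefficient. Distinctness of the separable monomials $n^{r_i}$ ensures no cancellation between different $i$ at the top level.

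The main obstacle is this last point: controlling the lower-order terms produced by expanding $(mn+k)^{r_i}$, which are again of the form $\eta'(n^{s})$ with $s$ of smaller derivational degree. One must also make sure that terms from different $i$ sharing a monomial $n^s$ do not accidentally cancel the irrational coefficient. I would handle this by inducting on derivational degree, treating only the maximal one as decisive. I would also replace $\mF(\eta_i)$ by $\mF$ of the rescaled additive polynomial $\eta_i(m^{r_i}\cdot)$, which differs only in finite index, and absorb that index into a larger $m$. The final sentence is then immediate: if $\sum_i\mF(\eta_i)=\mT^c$, then $K$ is a single coset and $g$ is well distributed in $\mT^c$.
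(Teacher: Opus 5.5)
\textbf{Genuine gap in the main step.} Your deduction of the final sentence is the paper's: if $\mF(g)=\mT^c$, every component $\mF(g)+g(k)$ is all of $\mT^c$. Your equal-weights argument in the additive case is also fine given \cite[Section 7]{bl}, since $k\mapsto\eta(k)+\mF(\eta)$ is a homomorphism on $\mZ/m\mZ$. The paper does not reprove the second sentence; it takes it from \cite{bl}. Your reconstruction of it breaks at its decisive step.

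You invoke \cite[Theorem 8.1]{bl} as giving vanishing averages of $e(P(n))$ ``whenever the top-derivational-degree separable component has an irrational additive coefficient.'' In finite characteristic this criterion is false, for exactly the reason this paper was written. In Example~\ref{eg: finite index subtorus}, $\eta_0(n)=\alpha n-t^{q-1}\alpha^q n^q$ has irrational coefficients and a single separable monomial. Yet $\eta_0(n)=\phi(n\alpha)$ with $\phi(x)=x-t^{q-1}x^q$. Hence $\eta_0(\mZ)\subseteq\mF(\eta_0)=\{x:x_{-1}=0\}$, and $e(\eta_0(n))=1$ for every $n$. The Frobenius-twisted coefficients of an additive polynomial can cancel, so irrationality of coefficients does not control the Weyl sums. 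The correct hypothesis is that the character is nontrivial on the $\Phi$-subtorus of the relevant component. With that hypothesis, the criterion you cite is no longer a lemma but the statement being proved. \cite[Theorem 8.1]{bl} is used in this paper precisely as ``there is $m$ with $g(mn+k)$ well distributed in $\mF(g)+g(k)$,'' so your argument is circular.

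\textbf{How to close it.} Either cite \cite[Theorem 8.1]{bl} outright for the second sentence, as the paper does, or actually supply the proof. The latter needs two things your sketch lacks.

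First, your index $i$ (maximal derivational degree among those with $\chi|_{\mF(\eta_i)}\ne 1$) need not carry the top-degree part of $h\cdot g(mn+k)$. An index $i'$ with $\chi|_{\mF(\eta_{i'})}=1$ can have larger derivational degree, and lower-order terms of $(mn+k)^{r_{i'}}$ can land on $n^{r_i}$. This is repairable: with $m$ a common period, $\chi(\eta_{i'}((mn+k)^{r_{i'}}))=\chi(\eta_{i'}(k^{r_{i'}}))$ is constant in $n$, so such indices should be discarded before expanding.

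Second, and more seriously, the ``induction on derivational degree'' is where all the work lies, and your sketch names this obstacle but does not resolve it. After van der Corput differencing, the new top coefficients are combinations such as $x\mapsto\zeta(yx^p)+\zeta(y^px)$, coming from $\zeta((n+y)^{p+1})-\zeta(n^{p+1})$. One must show that nontriviality of the character on the relevant $\Phi$-subtorus survives for enough $y$. This cannot be done by tracking irrationality of coefficients. It is the content of the differencing argument behind \cite[Theorem 8.1]{bl}.

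A minor point: by \cite[Section 7]{bl}, $\mF(\eta_i(m^{r_i}\,\cdot))=\mF(\eta_i)$ exactly, not merely up to finite index.
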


\begin{remark}
    As shown in \cite[Theorem 8.1]{bl}, the set $K$ can be chosen as $K = \{k \in \mZ : \|k\| < \|m\|\}$ for some $m \in \mZ \setminus \{0\}$.
    Well distribution in the components of $\mO(g) = \mF(g) + g(K)$ then means that for each $k \in K$, the $\mZ$-sequence $g(mn+k)$, $n \in \mZ$, is well distributed in $\mF(g)+g(k)$.
\end{remark}

The reader may be readily forgiven if unable to detect the difference between \cite[Theorem 0.3]{bl} and Theorem \ref{thm: corrected main}; all that has changed is the removal of a second ``f'' to modify ``iff'' into ``if'' in the final sentence of the theorem.
Nevertheless, this pesky extra ``f'' has significant consequences that have become relevant in recent work further developing the theory of uniform distribution in the finite characteristic setting (\cite{a, cgllw}).
In \cite{bl}, what is actually proved in the text is all but the final sentence of Theorem \ref{thm: corrected main}, with the implication that the final sentence follows by a straightforward (but unspecified) argument.
Let us observe that the final sentence of Theorem \ref{thm: corrected main} as written above does easily follow from the preceding portions of the theorem: indeed, if $\mT^c = \sum_{i=1}^d \mF(\eta_i)$, then each of the components $\mF(g) + g(k)$ is equal to $\mT^c$, so $g$ is well distributed in $\mT^c$.
On the other hand, the erroneous ``only if'' direction does not follow from the preceding phrases, as it is within the realm of possibility that $\mF(g)$ is a subgroup of finite index in $\mT^c$ and $g(k)$, $k \in K$, enumerates the different cosets.
The following examples demonstrates precisely this phenomenon.

\begin{example} \label{eg: finite index subtorus}
    We adapt the third example appearing right before \cite[Theorem 0.3]{bl} to produce an additive polynomial $\eta$ such that $\eta(n)$, $n \in \mZ$, is well distributed in $\mT$ but $\mF(\eta)$ is a proper subgroup of $\mT$.
    
    Let $\alpha \in \mT$ be irrational, and consider the additive polynomial $\eta_0(n) = \alpha n - t^{q-1} \alpha^q n^q$, where $q = |\F|$.
	(The corresponding polynomial when $\F = \F_2$ is labeled $g_6(n)$ in the example prior to \cite[Theorem 0.3]{bl}.)
	Since $\alpha$ is irrational, the $\Phi$-subtorus $\mF(\eta_0)$ is the subgroup
	\begin{multline*}
		\mF(\eta_0) = \{x - t^{q-1}x^q : x \in \mT\}
        = \left\{ \sum_{i=1}^{\infty} u_i t^{-i} - \sum_{i=1}^{\infty} u_i t^{-qi+q-1} : u_i \in \F \right\} \\
		 = \left\{ (u_1-u_1)t^{-1} + u_2 t^{-2} + \ldots + u_q t^{-q} + (u_{q+1} - u_2) t^{-(q+1)} + \ldots : u_i \in \F \right\} \\
		 = \left\{ \sum_{i=1}^{\infty} u_i t^{-i} : u_i \in \F, u_1 = 0 \right\}
         = \left\{ \sum_{i=2}^{\infty} u_i t^{-i} : u_i \in \F \right\}
	\end{multline*}
	of index $q$.
	(This is closely related to the $\Phi$-subtorus $\mF_4$ appearing in the example in \cite[p. 934]{bl}.)
	We now take $\eta(n) = \eta_0(n) + \frac{n}{t}$.
    The $\Phi$-subtorus $\mF(\eta)$ is still the proper subgroup $\mF(\eta_0)$.
    To see this, we use the observation from \cite[Section 7]{bl} that if $\eta'(n) = \eta(mn)$ for some $m \in \mZ \setminus \{0\}$, then $\mF(\eta') = \mF(\eta)$.
	Taking $m = t$ and noting that $\eta'(n) = \eta(tn) = \eta_0(tn)$ for every $n \in \mZ$, we have $\mF(\eta) = \mF(\eta') = \mF(\eta_0') = \mF(\eta_0)$.
    However, the term $\frac{n}{t}$ cycles through the values $ut^{-1}$, $u \in \F$, corresponding to cosets of $\mF(\eta)$.
    In the notation of Theorem \ref{thm: corrected main}, we may take $K = \F$, and $\eta(tn+u)$ is well distributed in $\mF(\eta) + \eta(u) = \mF(\eta) + ut^{-1}$ for each $u \in K = \F$.
    In this case, the well distribution in components provided by Theorem \ref{thm: corrected main} means that $\eta$ is well distributed in $\mT$, since each coset of $\mF(\eta)$ has exactly one representative in $K$.
\end{example}

The following statement provides a corrected ``if and only if'' characterization of when a polynomial sequence $g(n)$, $n \in \mZ$, is well distributed in $\mT^c$.

\begin{theorem} \label{thm: correct iff well distributed}
    A polynomial $\mZ$-sequence $g(n)$, $n \in \mZ$, is well distributed in $\mT^c$ if and only if either $\mF(g) = \mT^c$ or $\mF(g)$ is a subgroup of finite index in $\mT^c$ and $g$ is well distributed in the finite group $\mT^c/\mF(g)$.
\end{theorem}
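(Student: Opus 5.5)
Everything will be derived from Theorem \ref{thm: corrected main}, which gives $\mO(g) = \mF(g) + g(K)$ with $K = \{k : \|k\| < \|m\|\}$, together with well distribution of each $g(mn+k)$, $n \in \mZ$, in the coset $\mF(g) + g(k)$. The argument reduces to comparing Haar measure on $\mT^c$ with the limiting measure $\mu = \sum_{k \in K} |K|^{-1} \mu_{\mF(g)+g(k)}$. To see that $\mu$ is the limit measure for every F{\o}lner sequence $(\Phi_N)$ (not merely along progressions), I will first check that the sets $\Phi_N \cap (m\mZ+k)$ have asymptotic density $1/|K|$ in $\Phi_N$ and, after translation and dilation, form F{\o}lner sequences in $\mZ$. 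Averaging over $\Phi_N$ then splits into averages over the residue classes mod $m$. This bookkeeping is routine, since $m\mZ$ has finite index $|K| = \|m\|$ in $\mZ$.

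For the ``if'' direction: if $\mF(g) = \mT^c$, then each component equals $\mT^c$ and we are done, as already remarked after Theorem \ref{thm: corrected main}. Suppose instead that $\mF(g)$ has finite index and that the image of $g$ in the finite group $G = \mT^c/\mF(g)$ is well distributed, i.e. each coset is hit with density $1/[\mT^c:\mF(g)]$ along every F{\o}lner sequence. Then $\mu$ assigns to each coset $x + \mF(g)$ the weight $|\{k \in K : g(k) \in x + \mF(g)\}|/|K|$. By the periodicity of $g$ mod $\mF(g)$ with period $m$, this weight is exactly the density of $\{n : g(n) \in x + \mF(g)\}$, which equals $1/[\mT^c:\mF(g)]$. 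On each coset, $\mu$ is Haar measure of that coset. Hence $\mu$ is the Haar measure of $\mT^c$.

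For the ``only if'' direction, suppose $g$ is well distributed in $\mT^c$. Then $g(\mZ)$ is dense, so $\mT^c = \mO(g) = \mF(g) + g(K)$ is a finite union of cosets of the closed subgroup $\mF(g)$, and $\mF(g)$ has finite index. If the index is $1$ we are in the first case. Otherwise, the indicator functions of cosets of $\mF(g)$ are continuous, because cosets of a closed finite-index subgroup are clopen. Applying the definition of well distribution to these indicators shows that the projection of $g$ to $G$ is well distributed with respect to the uniform measure.

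The main obstacle is the F{\o}lner-sequence bookkeeping in the first step. It requires arbitrary F{\o}lner sequences to equidistribute among the residue classes mod $m$ and restrictions to residue classes to remain F{\o}lner after rescaling. I would handle this by noting that $\sum_{u \in m\mZ} 1_{\Phi_N}(n+u)$-type invariance forces $|\Phi_N \cap (m\mZ+k)|/|\Phi_N| \to 1/|K|$. Then I would apply the well distribution of $g(mn+k)$ to the F{\o}lner sequence $\{n : mn+k \in \Phi_N\}$.
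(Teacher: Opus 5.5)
Your proposal is correct and follows the paper's proof. For ``only if'' you get finite index from $\mT^c=\mF(g)+g(K)$ and push Haar measure to the quotient, using continuous indicators of the clopen cosets. For ``if'' you use periodicity mod $m$ to see that $g(k)$, $k\in K$, hits each coset of $\mF(g)$ equally often, so $\frac{1}{|K|}\sum_{k\in K}\mu_{\mF(g)+g(k)}=\mu_{\mT^c}$.

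Your extra F{\o}lner bookkeeping correctly fills in a step the paper leaves implicit: each class $m\mZ+k$ has density $1/|K|$ in $\Phi_N$, and $\{n : mn+k\in\Phi_N\}$ is again F{\o}lner. The density claim is best justified by comparing $\Phi_N$ with its translate by $k-k'$, rather than by the unclear ``$\sum_{u\in m\mZ}$'' invariance you mention.
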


\begin{proof}
    Before proving the theorem, we note that if $\mF(g) = \mT^c$, then $\mF(g)$ is a subgroup of finite index (index $1$), and the quotient group $\mT^c/\mF(g)$ is the trivial group, so $g$ is automatically well distributed in $\mT^c/\mF(g)$.
    Thus, we do not have to deal with the case $\mF(g) = \mT^c$ separately.
    
    Now we proceed with the proof.
    Suppose $g(n)$, $n \in \mZ$, is well distributed in $\mT^c$.
    By Theorem \ref{thm: corrected main}, the closure $\mO(g)$ is of the form $\mF(g) + g(K)$ for some finite set $K \subseteq \mZ$, but being well distributed implies $\mO(g) = \mT^c$, so
    \begin{equation*}
        \mT^c = \mF(g) + g(K).
    \end{equation*}
    Therefore, the $\Phi$-subtorus $\mF(g)$ has index at most $|g(K)| \le |K| < \infty$.
    By assumption, $g$ is well distributed in $\mT^c$, so it is also well distributed in every compact quotient group, since the Haar measure on $\mT^c$ will project to the Haar measure on the quotient.
    In particular, $g$ is well distributed in $\mT^c/\mF(g)$.

    Conversely, suppose $\mF(g)$ is a subgroup of finite index in $\mT^c$ and $g$ is well distributed in $\mT^c/\mF(g)$.
    By \cite[Theorem 8.1]{bl}, let $m \in \mZ \setminus \{0\}$ such that for each $k \in K = \{l \in \mZ : \|l\| < \|m\|\}$, the $\mZ$-sequence $g(mn+k)$, $n \in \mZ$, is well distributed in $\mF(g) + g(k)$.
    Then $g(n) \equiv g(k) \pmod{\mF(g)}$ for $n \equiv k \pmod{m}$.
    Since $g(n)$, $n \in \mZ$, is well distributed in $\mT^c/\mF(g)$, we deduce that $g(k)$, $k \in K$, enumerates each element of $\mT^c/\mF(g)$ an equal number of times.
    Therefore, $g$ is well distributed with respect to the measure
    \begin{equation*}
        \frac{1}{|K|} \sum_{k \in K} \mu_{\mF(g) + g(k)} = \frac{1}{[\mT^c : \mF(g)]} \sum_{x \in \mT^c/\mF(g)} \mu_{\mF(g)+x} = \mu_{\mT^c}.
    \end{equation*}
    That is, $g$ is well distributed in $\mT^c$.
\end{proof}

Going in the other direction, the condition $\mF(g) = \mT^c$ characterizes a stronger equidistributional property of polynomials, as expressed in the next theorem.

\begin{theorem} \label{thm: correct iff T^c}
    The following are equivalent for a polynomial $\mZ$-sequence $g(n)$, $n \in \mZ$, in $\mT^c$:
    \begin{enumerate}[(i)]
        \item $\mF(g) = \mT^c$
        \item for any $m \in \mZ \setminus \{0\}$ and $k \in \mZ$, the $\mZ$-sequence $g(mn+k)$, $n \in \mZ$, is well distributed in $\mT^c$.
    \end{enumerate}
\end{theorem}

\begin{proof}
    Suppose (i) holds.
    The key observation comes from the remark in \cite[p. 946]{bl}: if $m \in \mZ \setminus \{0\}$ and $k \in \mZ$, then the polynomial $g'_{m,k}(n) = g(mn+k)$ satisfies $\mF(g'_{m,k}) = \mF(g)$.
    Hence, for each $m \in \mZ \setminus \{0\}$ and $k \in \mZ$, we have $\mF(g'_{m,k}) = \mT^c$, so $g'_{m,k}$ is well distributed in $\mT^c$ by Theorem \ref{thm: corrected main}.

    Conversely, suppose (ii) holds.
    By \cite[Theorem 8.1]{bl}, we may pick $m \in \mZ \setminus \{0\}$ so that $g(mn+k)$, $n \in \mZ$, is well distributed in $\mF(g) + g(k)$ for each $k \in \mZ$ with $\|k\| < \|m\|$.
    Condition (ii) then implies $\mF(g) + g(k) = \mT^c$.
    But $g(k)$ is a single element, so we conclude that $\mF(g) = \mT^c - g(k) = \mT^c$.
\end{proof}


\subsection{A note on the counterexample of Champagne--Ge--L\^{e}--Liu--Wooley}

The error in the original statement of \cite[Theorem 0.3]{bl} was brought to light by a counterexample in \cite{cgllw} to a statement dubbed ``Hypothesis 5.1,'' which can be deduced from the ``iff'' statement in \cite[Theorem 0.3]{bl}.
The following is a restatement of ``Hypothesis 5.1'' in the language and notation set in Section \ref{sec: notation}.

\begin{premise} \label{premise}
	Let $g(n) = \alpha_0 + \eta_1(n^{r_1}) + \ldots + \eta_d(n^{r_d})$, $n \in \mZ$, where $\alpha_0 \in \mT^c$, $\eta_1, \ldots, \eta_d$ are additive polynomial $\mZ$-sequences and $n^{r_1}, \ldots, n^{r_d}$ are distinct separable monomials.
	If $\eta_i(n)$ is well distributed in $\mT^c$ for some $i \in \{1, \ldots, d\}$, then $g(n)$ is also well distributed in $\mT^c$.
\end{premise}

We now give a streamlined construction of a counterexample to Premise \ref{premise}.
Utilizing Example \ref{eg: finite index subtorus} above of an additive polynomial whose associated $\Phi$-subtorus is of finite index, we will exhibit an explicit polynomial $g$ that contradicts Premise \ref{premise}.
The behavior of this polynomial is similar to the counterexample in \cite{cgllw}, but the argument in \cite{cgllw} is less direct, relying on a general statement (\cite[Theorem 5.3]{cgllw}) about the existence of additive polynomials having a certain relationship with group characters on $\mT$.

\begin{proposition}
	There exists an additive polynomial $\mZ$-sequence $\eta(n)$ in $\mT$ and a number $r \in \N$, $p \nmid r$, such that $\eta(n)$ is well distributed in $\mT$ but $g(n) = \eta(n^r)$ is not well distributed in $\mT$.
\end{proposition}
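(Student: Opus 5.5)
Take $\eta(n) = \eta_0(n) + \frac{n}{t}$ from Example \ref{eg: finite index subtorus}. There it was shown that $\eta$ is well distributed in $\mT$ while $\mF(\eta) = \mF(\eta_0) = \{\sum_{i \ge 2} u_i t^{-i}\}$ has index $q$. So it remains to choose $r$ with $p \nmid r$ such that $g(n) = \eta(n^r)$ fails to be well distributed.

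By Theorem \ref{thm: corrected main}, $\mF(g) = \mF(\eta)$, which is a proper subgroup of finite index. Theorem \ref{thm: correct iff well distributed} then reduces the problem to a finite computation: $g$ is well distributed in $\mT$ if and only if $g$ is well distributed in the quotient $\mT/\mF(\eta) \cong \F$. This quotient is realized by the map $x \mapsto x_{-1}$, the coefficient of $t^{-1}$.

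Next I compute the image of $g(n)$ in this quotient. Since $\eta_0$ takes values in $\mF(\eta_0)$, we have $\eta_0(m) \in \mF(\eta)$ for every $m \in \mZ$. Hence
\begin{equation*}
g(n) \equiv \frac{n^r}{t} \pmod{\mF(\eta)},
\end{equation*}
and its $t^{-1}$-coefficient is the constant term of $n^r$, namely $n(0)^r \in \F$. As $n$ ranges over a F{\o}lner sequence, $n(0)$ is asymptotically equidistributed in $\F$, so the image of $g(n)$ in $\F$ has the distribution of $u \mapsto u^r$ on uniform $u \in \F$.

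To break equidistribution, I choose $r = q-1$, which satisfies $p \nmid r$. Then $u^{q-1}$ equals $0$ for $u = 0$ and $1$ for all other $u$. So $g$ only hits the cosets $\mF(\eta)$ and $\mF(\eta) + t^{-1}$, with weights $1/q$ and $(q-1)/q$. This is not uniform on $\F$ (already for $q = 2$ the weights are $1/2, 1/2$ but over different cosets than the full group for $q > 2$; for $q=2$ one needs a different argument, see below).

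The case $q = 2$ needs separate handling, because then every $r$ with $p \nmid r$ is odd and $u \mapsto u^r$ is the identity on $\F_2$. I would instead work with a larger field $\F$, where some $r$ coprime to $p$ makes $u \mapsto u^r$ non-bijective: any $r$ with $\gcd(r, q-1) > 1$, such as $r = q - 1$, works whenever $q > 2$. The proposition only asserts existence, so I fix $\F$ with $q \ge 3$.

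The main thing to verify is that the reduction is legitimate. This requires $\mF(g) = \mF(\eta)$ for $g(n) = \eta(n^r)$, which is exactly the statement $\mF(g) = \sum_i \mF(\eta_i)$ from Theorem \ref{thm: corrected main} with $d = 1$. It also requires a direct check that the $t^{-1}$-coefficient map is a continuous character whose kernel is $\mF(\eta)$. Alternatively, one can avoid Theorem \ref{thm: correct iff well distributed} altogether: take the nontrivial character $\chi(x) = e(x)$ restricted via the $t^{-1}$-coefficient, and show directly that its average along $g(n)$ is $\frac{1}{q}\sum_{u \in \F} e(u^{q-1}/t) \neq 0$.
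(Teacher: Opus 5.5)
\textbf{Verdict: correct for $|\F| \ge 3$, but there is a genuine gap at $\F = \F_2$.}

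For $|\F| \ge 3$ your argument is correct and matches the paper's Case 1. You use the same $\eta$ from Example~\ref{eg: finite index subtorus} and the same exponent $r = q-1$. The paper reads the conclusion off Theorem~\ref{thm: corrected main}: $g$ is well distributed in the components $\mF(\eta) + g(u)$, $u \in \F$, and $\{u^{q-1} : u \in \F\} = \{0,1\}$ misses cosets. Your character-sum variant is slightly more economical. It exhibits one nontrivial character with average $\frac{1}{q}\bigl(1 + (q-1)e(t^{-1})\bigr)$, which is nonzero because $q \ge 3$. This needs neither $\mF(g) = \mF(\eta)$ nor Theorem~\ref{thm: correct iff well distributed}.

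\textbf{The gap: $\F = \F_2$.} The field $\F$ is fixed by the setup ($\mZ = \F[t]$, $\mT = \mR/\mZ$, $p$ the characteristic of $\F$). The existential quantifier ranges only over $\eta$ and $r$, so you may not pass to a larger field; as written, your proposal proves nothing over $\F_2[t]$. The paper treats $\F = \F_2$ as a separate case precisely because of the obstruction you identified. With a subtorus of index $|\F|$ whose cosets are read off from $n \bmod t$, every admissible $r$ acts bijectively on $\F_2$.

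\textbf{The missing idea.} Use a $\Phi$-subtorus of index $4$ whose cosets are detected by $n \bmod t^2$. There $n \mapsto n^r$ fails to be injective for $r \ge 2$, since $t^r \equiv 0 \equiv 0^r \pmod{t^2}$. Concretely:
\begin{itemize}
\item Take $\eta_0(n) = \alpha n + (t+t^2)\alpha^2 n^2$ with $\alpha$ irrational. Then $\mF(\eta_0) = \{x + (t+t^2)x^2 : x \in \mT\} = \{\sum_{i \ge 3} u_i t^{-i}\}$.
\item Put $\eta(n) = \eta_0(n) + n/t^2$ and $r = 3$. Then $\mF(\eta) = \mF(\eta_0)$, and $\eta(n) \equiv n/t^2 \pmod{\mF(\eta)}$ is equidistributed over the four cosets, so $\eta$ is well distributed.
\item On the other hand, $g(n) = \eta(n^3) \equiv n^3/t^2 \pmod{\mF(\eta)}$. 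Since $n^3 \bmod t^2$ takes only the values $0, 1, 1+t$, the coset $\mF(\eta) + t^{-1}$ is never visited.
\end{itemize}
The paper's table lists $1 \mapsto t^{-1}$, but in fact $1^3/t^2 = t^{-2}$, so the missed coset is $\mF(\eta)+t^{-1}$; the conclusion is unaffected. With this $\eta$ your argument goes through unchanged. For instance, $\eta_0$ takes values in $\mF(\eta_0)$, and the average of $e(g(n))$ is the average of $(-1)^{v_0 v_1}$ over $n \equiv v_0 + v_1 t$, which equals $\tfrac12 \neq 0$.
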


\begin{proof}
	For technical reasons, we consider separately the cases $\F = \F_2$ and $\F \ne \F_2$. \\
	
	\underline{Case 1}: $\F \ne \F_2$.
	
	Let $\eta$ be the additive polynomial defined in Example \ref{eg: finite index subtorus}.
	For $r \in \N$, the polynomial $\mZ$-sequence $g(n) = \eta(n^r)$ is well distributed in the components $\mF(\eta) + g(u)$, $u \in \F$, of $\mO(g) = \mF(\eta) + g(\F)$ by Theorem \ref{thm: corrected main}.
	Hence, all that we need to do to ensure that $g(n)$ is not well distributed in $\mT$ is to pick $r$ such that $\{u^r : u \in \F\}$ is a proper subset of $\F$.
	For example, we can take $r = q-1$ so that $\{u^r : u \in \F\} = \{0,1\}$. \\
	
	\underline{Case 2}: $\F = \F_2$.
	
	We follow a similar strategy as in the previous case but with a small modification so that the group $\mF(\eta)$ has index 4 rather than index $|\F| = 2$.
	
	Let $\alpha \in \mT$ be irrational, and consider the additive polynomial $\eta_0(n) = \alpha n + (t+t^2) \alpha^2 n^2$.
	Since $\alpha$ is irrational, the $\Phi$-subtorus $\mF(\eta_0)$ is the subgroup
	\begin{multline*}
		\mF(\eta_0) = \{x + (t+t^2)x^2 : x \in \mT\} 
        = \left\{ \sum_{i=1}^{\infty} u_i t^{-i} + \sum_{i=1}^{\infty} u_i t^{-2i+1} + \sum_{i=1}^{\infty} u_i t^{-2i+2} : u_i \in \F_2 \right\} \\
		 = \left\{ (u_1+u_1)t^{-1} + (u_2+u_2)t^{-2} + (u_3+u_2)t^{-3} + (u_4+u_3)t^{-4} + (u_5+u_3)t^{-5} + \ldots : u_i \in \F_2 \right\} \\
		 = \left\{ \sum_{i=1}^{\infty} u_i t^{-i} : u_i \in \F_2, u_1 = u_2 = 0 \right\} = \left\{ \sum_{i=3}^{\infty} u_i t^{-i} : u_i \in \F_2 \right\}.
	\end{multline*}
	Let $\eta(n) = \eta_0(n) + \frac{n}{t^2}$.
	As in Example \ref{eg: finite index subtorus}, we have $\mF(\eta) = \mF(\eta_0)$ but $\eta(n)$ is well distributed in $\mF(\eta) + \eta(\F_2 + \F_2 t) = \mT$.
	
	We now consider the polynomial $g(n) = \eta(n^3)$.
	Given $n = \sum_{i=0}^N v_i t^i \in \mZ$, we have
	\begin{equation*}
		\frac{n^3}{t^2} \equiv \frac{(v_0 + v_1t)^3}{t^2} \equiv \frac{v_0 + v_0v_1t + v_0v_1t^2 + v_1t^3}{t^2} \equiv v_0v_1 t^{-1} + v_0 t^{-2} \pmod{\mZ}.
	\end{equation*}
	Thus, as $n$ ranges over $\mZ$, the polynomial $\frac{n^3}{t^2}$ cycles through the values
	\begin{equation*}
		0 \mapsto 0, \quad 1 \mapsto t^{-1}, \quad t \mapsto 0, \quad \text{and} \quad 1+t \mapsto t^{-1} + t^{-2}
	\end{equation*}
	mod $\mZ$.
	By Theorem \ref{thm: corrected main}, we conclude that $g(n)$ is well-distributed in the components of $\mF(\eta) + \{0, t^{-1}, 0, t^{-1} + t^{-2}\}$, which misses the coset $\mF(\eta) + \{t^{-2}\}$.
\end{proof}


\subsection{Impact on subsequent papers}

In the past few years, \cite[Theorem 0.3]{bl} has been quoted in several follow-up works, so it is worth commenting briefly on the usage of \cite[Theorem 0.3]{bl} in other papers.
The abbreviated summary of our more detailed discussion to follow is that there is no cause for alarm and the extraneous ``f'' appearing in \cite{bl} can safely be excised as in Theorem \ref{thm: corrected main} above without consequential downstream effects.

The issue with the original formulation of \cite[Theorem 0.3]{bl} was raised in \cite{cgllw} following its usage in \cite{a} to prove a function field analogue of Weyl's classical polynomial equidistribution theorem.
As explained in \cite[Remark 2.2]{a}, the argument in \cite{a} uses only the correct ``if'' direction, and the latest version of the article uses the updated formulation of Theorem \ref{thm: corrected main} appearing above rather than the original formulation from \cite{bl}.

Versions of \cite[Theorem 0.3]{bl} were previously quoted in \cite[Theorem 1.7]{ab_low_characteristic} and \cite[Theorem 5.2]{ab_fields}.
The first of these, \cite[Theorem 1.7]{ab_low_characteristic}, includes only correct portions of the theorem, omitting the final sentence entirely.
The second, \cite[Theorem 5.2]{ab_fields}, quoted the incorrect ``if and only if'' statement in a previous version.
This has now been updated to only include the correct ``if'' direction, and subsequent arguments in the proof of \cite[Theorem 1.5]{ab_fields}---the main theorem deduced from \cite[Theorem 0.3]{bl}---have been repaired accordingly in \cite[Section 5]{ab_fields}.


\section{Intersective polynomials} \label{sec: intersective}

In this section we address the issue described in item (2) of the introduction, which arises from the remark following \cite[Theorem 9.5]{bl} concerning the meaning of \emph{intersective} polynomials in the function field setting.
In order to enable our discussion, let us begin by labeling various properties of a nonzero polynomial $q(n) \in \mZ[n]$ that have appeared under the moniker ``intersective:''
\begin{enumerate}[(P1)]
	\item	for any subgroup $\Lambda$ of finite index in $\mZ$, there exists $m \in \mZ$ such that $q(mn) \in \Lambda$ for all $n \in \mZ$
	\item	for any subgroup $\Lambda$ of finite index in $\mZ$, there exists $m \in \mZ$ such that $q(m) \in \Lambda$
	\item	$q$ has a root mod $m$ for every $m \in \mZ \setminus \{0\}$
	\item	for any subset $E \subseteq \mZ$ with positive density, there exist $x,y \in E$ and $m \in \mZ$ such that $x - y = q(m)$
\end{enumerate}

There are several choices of ``density'' we may take in (P4) that all lead to an equivalent notion, but for concreteness, we will use the \emph{upper Banach density}
\begin{equation*}
    d^*(E) = \sup_{(\Phi_N)} \limsup_{N \to \infty} \frac{|E \cap \Phi_N|}{|\Phi_N|},
\end{equation*}
where the supremum is taken over all F{\o}lner sequences $(\Phi_N)_{N \in \N}$ in $\mZ$.

We briefly summarize the usage of these different properties for defining intersective polynomials over $\mZ$ in the literature.
Intersective polynomials are defined in terms of (P4) in \cite{le}, where it is also posed as an open question whether (P4) is equivalent to (P3) (see \cite[Problem 7]{le}).
Polynomials satisfying property (P1) are termed intersective in \cite{bl}, with the suggestion that (P1) coincides with (P4) (see the remark following \cite[Theorem 9.5]{bl}).
L\^{e}'s definition using (P4) as well as the conjectural view of an equivalence between (P4) and (P3) was propogated in \cite{li} (see \cite[Definition 1]{li}), where (P4) is shown to imply a quantitatively refined version of itself.
The property (P2) is used as the definition of intersective in \cite{mishra} (see \cite[Eq. (1.1)]{mishra}), based in part on communication with the first and second authors of this note.
Motivated by corresponding notions in the integer setting, the property (P3) is used as a definition in \cite{llw} (see the discussion following \cite[Theorem 1.9]{llw}).
In \cite{llw}, the authors also observe that the definition via (P3) is not obviously the same as the definition in \cite{bl} and formulate a question aimed at determining whether the properties are equivalent.
As written, \cite[Question 1]{llw} asks whether (P2) and (P3) are equivalent, as the reinterpretation of the results from \cite[Section 9]{bl} attributed as the Bergelson--Leibman theorem in \cite[Section 6.2]{llw} settles on using (P2) as a definition and refer to the use of the stronger condition (P1) in \cite{bl} as a ``misprint'' (see \cite[p. 32, footnote 2]{llw}).
(The discussion below seeks to clarify this situation and more fully address the ``misprint.'')

While the issues of determining implications and (in)equivalences between the different properties above has been dealt with by now in work of the first and second authors (see \cite[p. 3, footnote 3]{ab_fields} and \cite[Theorem 4.13, Definition 4.14, and Remark 4.15]{ab_rings}), this seems not to have fully settled the matter.
Indeed, even after the appearance of an early version of \cite{ab_fields}, the notion of intersectivity was described as ``being in a state of flux'' in \cite[p. 32, footnote 3]{llw}, in part due to the lack of a clear proof and exposition in the existing literature.

The goal of this section is to rectify the current lack of clarity.
We will show that (P2), (P3), and (P4) are equivalent, and any of these three properties is suitable for defining intersectivity.
On the other hand, the property (P1) is strictly stronger than the other properties and conflicts with the usual meaning of the word ``intersective'' as used in other contexts within additive combinatorics.
In the final subsection, we prove a rich assortment of properties that intersective polynomials exhibit vis-\`{a}-vis additive combinatorics and dynamical recurrence.

\subsection{Property (P1)}

As a first observation, we note that (P1) immediately implies (P2) by taking $n = 1$.
We now show that this implication does not go the other way; that is, there are polynomials $q(n) \in \mZ[n]$ satisfying (P2) but not (P1).

\begin{proposition} \label{prop: inequivalence}
	There exists a polynomial $q(n) \in \mZ[n]$ satisfying (P2) but not (P1).
\end{proposition}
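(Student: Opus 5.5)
The plan is to exploit the fact that (P1) quantifies over \emph{all} $n \in \mZ$, including $n = 0$. This makes (P1) force a rigid condition on the constant term of $q$, whereas (P2) is satisfied by any polynomial that has a root in $\mZ$.

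First, I would extract the necessary condition from (P1). Suppose $q$ satisfies (P1) and let $\Lambda \subseteq \mZ$ be any subgroup of finite index. Choose $m$ as in (P1) and take $n = 0$; this gives $q(0) = q(m \cdot 0) \in \Lambda$. Hence $q(0)$ lies in every subgroup of finite index in $\mZ$. In particular, for each nonzero $a \in \mZ$ the ideal $a\mZ$ is an additive subgroup of index $|\F|^{\deg a} < \infty$, so $q(0) \in \bigcap_{a \ne 0} a\mZ = \{0\}$. Thus (P1) forces $q(0) = 0$. For the concrete example below only the single subgroup $t\mZ$ is needed: it consists of the polynomials with zero constant term and has index $|\F|$.

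Next, I would take $q(n) = n + 1$, or indeed any nonzero polynomial with a root in $\mZ$ and nonzero constant term.
\begin{itemize}
\item \emph{(P2) holds.} Choosing $m = -1$ gives $q(m) = 0 \in \Lambda$ for every subgroup $\Lambda$.
\item \emph{(P1) fails.} We have $q(0) = 1 \notin t\mZ$, while $t\mZ$ has finite index, which contradicts the first step.
\end{itemize}
This completes the proof.

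I do not expect any real obstacle. The only point to double-check is that (P1) is meant to include $n = 0$ (it says ``for all $n \in \mZ$''), and that $t\mZ$ is a legitimate finite-index subgroup; both are immediate.

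If a less degenerate example is desired, say one with $q(0) = 0$ or one not already covered by the observation above, I would instead look for $q$ whose values have the form $q(mn) = m^{r}(\cdots)$ with the lowest-order behaviour controlled by a separable monomial. I would then use a non-ideal finite-index subgroup, such as those arising as preimages of proper subgroups of $\mZ/t^k\mZ$ that are not closed under multiplication, to obstruct $q(m\mZ) \subseteq \Lambda$. However, the simple example suffices for the proposition as stated.
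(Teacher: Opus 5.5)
Your proof is correct and is essentially the paper's: it uses the same example $q(n) = n+1$ and the same obstructing subgroup $t\mZ$, except that you witness the failure of (P1) with $n = 0$ rather than the paper's $n = t$ (your $n=0$ argument is exactly the one the paper uses in its next proposition, which shows that (P1) holds if and only if $q(0) = 0$). Your closing aside is moot, though: no example with $q(0) = 0$ can exist, because taking $m = 0$ gives $q(mn) = q(0) = 0 \in \Lambda$ for every $n$, so every such $q$ satisfies (P1).
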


\begin{proof}
	We use an example described in \cite[p. 3, footnote 3]{ab_fields} and repeated in \cite[Remark 4.15]{ab_rings}.
	Let $q(n) = n+1$.
	Then $q$ satisfies (P2): given a finite index subgroup $\Lambda$, we take any $m \in \Lambda - 1$ and have $q(m) \in \Lambda$.
	However, $q$ does not satisfy (P1): given any $m \in \mZ$, if we take $n = t$, then $q(mn) = q(tm) \equiv 1 \pmod{t}$, so the finite index subgroup $\Lambda = t\mZ$ witnesses the failure of property (P1).
\end{proof}

In fact, we can give a simple characterization of polynomials satisfying (P1).

\begin{proposition}
    A polynomial $q(n) \in \mZ[n]$ satisfies (P1) if and only if $q(0) = 0$.
\end{proposition}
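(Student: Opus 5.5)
We prove the two directions separately. Both are short, and the only real input is the structure of finite index subgroups of $\mZ$.

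For the ``if'' direction, suppose $q(0) = 0$, so that $q(n) = n\,h(n)$ for some $h(n) \in \mZ[n]$. Let $\Lambda$ be a subgroup of finite index in $\mZ$. The key observation is that $\Lambda$ contains a nonzero ideal $m\mZ$. To see this, note that $\mZ/\Lambda$ is a finite abelian group of exponent $p$, being a quotient of an $\F_p$-vector space. The elements $1, t, t^2, \ldots$ cannot all have distinct images in this finite quotient. Hence there are $i < j$ with $t^j - t^i \in \Lambda$, and the same holds after multiplying by $t^k$ for every $k \ge 0$. It does not immediately follow that $\Lambda$ absorbs multiplication by $\mZ$, since $\Lambda$ is only an additive subgroup. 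We therefore argue differently: the maps $x \mapsto t^k x \bmod \Lambda$ are not obviously defined on $\mZ/\Lambda$, so instead we consider the subgroups $t^k\mZ$.

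Let $V = \mZ/\Lambda$, which is finite. The descending chain of subspaces given by the images of $t^k\mZ$ in $V$ need not reach $0$; for example, the $\F_p$-span of $\{1, t, t^2, \ldots\}$ modulo a hyperplane has all images equal to $V$. So this approach is too naive as well. Instead we use the largest ideal contained in $\Lambda$,
\[
I = \{x \in \mZ : x\mZ \subseteq \Lambda\}.
\]
We claim $I \neq 0$. Since $\mZ$ is a polynomial ring, $I = \bigcap_{y \in \mZ} y^{-1}\Lambda$, which is not obviously of finite index. However, $I$ is the kernel of the $\mZ$-module map $\mZ \to \operatorname{Hom}_{\F_p}(\mZ, V)$ given by $x \mapsto (y \mapsto xy \bmod \Lambda)$, and this target is infinite, so no conclusion follows directly. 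Indeed, a finite index subgroup need not contain any nonzero ideal. Take $\Lambda$ to be the kernel of the functional $\sum a_i t^i \mapsto \sum_i a_i$ over $\F_p$ (the sum of all coefficients, not evaluation at $1$ when $\F \neq \F_p$). This $\Lambda$ has index $p$, and $x\mZ \subseteq \Lambda$ would force every coefficient sum of $x t^k$ to vanish, but the coefficient sums of $xt^k$ equal those of $x$ for all $k$.

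So the naive argument fails, and the right plan is to use the specific form $q(mn)$. Choose $m$ with $q(mn) \in \Lambda$ for all $n$ by a pigeonhole or recurrence argument on polynomial orbits: the map $n \mapsto q(mn) \bmod \Lambda$ should be made identically zero. Here I would invoke the finite index of $\Lambda$ together with the fact that $\Lambda^\perp \subseteq \mT$ is a finite set of rationals. These are characters $e(\beta \cdot)$ with $\beta \in \mQ$ having denominator $d$ of bounded degree. Now take $m = \prod d$ raised to a suitable $p$-power, so that for each such $\beta$ and each monomial we have $\beta a_j m^j n^j \in \mZ$ for all $j \ge 1$ with $a_j \neq 0$. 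This works because $d \mid m$ and $j \ge 1$, which forces $e(\beta q(mn)) = 1$ for all $n$. This is the step where $q(0) = 0$ is used.

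For the ``only if'' direction, suppose $q(0) = c \neq 0$. Choose $\Lambda = t^{N}\mZ$ with $N > \deg c$. Given any $m$, take $n = t^N$; then $q(mn) \equiv c \not\equiv 0 \pmod{t^N}$, exactly as in Proposition \ref{prop: inequivalence}. The main obstacle is the ``if'' direction, which requires correctly describing finite index subgroups via annihilating characters rather than ideals.
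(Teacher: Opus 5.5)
Your ``only if'' direction is correct. With $c = q(0) \neq 0$ and $\Lambda = t^N\mZ$ for $N > \deg c$, you get $q(mn) \equiv c \not\equiv 0 \pmod{t^N}$ for every $m$, taking $n = t^N$ as you do (or simply $n = 0$). This is the paper's argument, with an explicit $\Lambda$ in place of the fact that the finite-index subgroups of $\mZ$ intersect in $\{0\}$.

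The ``if'' direction, however, has a genuine gap. The claim that $\Lambda^\perp \subseteq \mT$ is a finite set of rationals is false. It also contradicts your own (correct) remark that a finite-index subgroup need not contain a nonzero ideal. If every $\beta \in \Lambda^\perp$ were rational with denominator $d_\beta$, then $d\mZ \subseteq (\Lambda^\perp)_\perp = \Lambda$ for $d = \prod d_\beta$.

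A correct example: for irrational $\beta \in \mT$, the subgroup $\Lambda = \{x \in \mZ : e(\beta x) = 1\}$ has index $p$, and $\Lambda^\perp = \F_p\beta$ contains the irrational $\beta$. Your coefficient-sum example does not work. Over $\F_p$ that functional is evaluation at $t = 1$, which is a ring homomorphism, so its kernel is the ideal $(t-1)\mZ$.

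Worse, no nonzero $m$ can work in general, so the denominator strategy cannot be repaired. Take $q(n) = n$ and the $\Lambda$ above. The requirement $q(mn) \in \Lambda$ for all $n$ says $m\mZ \subseteq \Lambda$, i.e.\ $e(\beta m y) = 1$ for all $y \in \mZ$, i.e.\ $\beta m \in \mZ$. This is impossible for $m \neq 0$ since $\beta$ is irrational.

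The missing observation is that (P1) does not require $m \neq 0$. Taking $m = 0$ gives $q(mn) = q(0) = 0 \in \Lambda$ for every $n$, and that is the paper's entire proof of this direction.
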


\begin{proof}
    Suppose $q(0) = 0$, and let $\Lambda \le \mZ$ be a subgroup of finite index.
    Then taking $m = 0 \in \mZ$, we have $q(mn) = q(0) = 0 \in \Lambda$ for every $n \in \mZ$, so $q$ satisfies (P1).

    Conversely, suppose (P1) holds.
    Let $\Lambda \le \mZ$ be a subgroup of finite index.
    By (P1), there exists $m \in \mZ$ such that $q(mn) \in \Lambda$ for all $n \in \mZ$, in particular for $n = 0$.
    Thus $q(0) \in \Lambda$.
    But $\Lambda$ was an arbitrary subgroup of finite index, so we conclude that $q(0) = 0$.
\end{proof}

\subsection{Proof of equivalences}

Now we turn to the proof of equivalences between (P2), (P3), and (P4).
All of these equivalences can be extracted from \cite[Theorem 4.13]{ab_rings}, which deals with a more general setting.
For ease of reference, we include here a simplified proof that is available in the setting at hand using only results and methods from \cite{bl}.

\begin{theorem} \label{thm: equivalences}
	Properties (P2), (P3), and (P4) are all equivalent for polynomials $q(n) \in \mZ[n]$.
\end{theorem}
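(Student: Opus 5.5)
We will prove the cycle of implications (P2) $\Leftrightarrow$ (P3), (P4) $\Rightarrow$ (P2), and (P2) $\Rightarrow$ (P4). The last of these is the substantive step.

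\emph{(P2) $\Leftrightarrow$ (P3).} Every subgroup of finite index in $\mZ = \F[t]$ contains $m\mZ$ for some nonzero $m$. To see this, note that $\mZ/\Lambda$ is a finite $\F_p[t]$-module under multiplication by $t$ only if $\Lambda$ is an ideal, which need not be the case. Instead we argue directly. Multiplication by $t$ permutes nothing in general, but the sequence of cosets $t^j + \Lambda$ must repeat. Hence some nonzero polynomial $m(t)$ with coefficients in $\F_p$ satisfies $m(t)\mZ \subseteq \Lambda$, at least after considering $\F$-multiples. More cleanly, $\bigcap_{a \in \mZ} (\Lambda : a)$ is a finite intersection of finite-index subgroups and is an ideal of finite index; alternatively, use that the largest ideal contained in $\Lambda$ is the kernel of the action of $\mZ$ on the finite set $\mZ/\Lambda$ by multiplication, which is nonzero because $\mZ$ is infinite.

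With this in hand, (P2) for $\Lambda \supseteq m\mZ$ follows from (P2) for the ideal $m\mZ$, which is exactly (P3). Conversely, each $m\mZ$ with $m \neq 0$ is a finite-index subgroup, so (P2) gives (P3).

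\emph{(P4) $\Rightarrow$ (P3).} Take $E = m\mZ$, which has density $1/|m\mZ\backslash\mZ| > 0$. Then there are $x, y \in E$ with $x - y = q(n)$, so $q(n) \in m\mZ$.

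\emph{(P2) $\Rightarrow$ (P4).} We go through the Furstenberg correspondence principle for $\mZ$-actions. A set $E$ with $d^*(E) > 0$ yields a measure-preserving $\mZ$-system $(X,\mu,T)$ and a set $A$ with $\mu(A) > 0$ such that
\[
d^*\bigl(E \cap (E - n)\bigr) \ge \mu(A \cap T^{-n}A).
\]
It therefore suffices to find $n$ with $\mu(A \cap T^{-q(n)}A) > 0$.

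Decompose $L^2 = \mH_{rat} \oplus \mH_{rat}^\perp$, where $\mH_{rat}$ is spanned by eigenfunctions whose eigenvalues are characters $e(n\alpha)$ with $\alpha \in \mQ/\mZ$ (torsion characters). Fix a large $m$ such that $f = \mathbf 1_A$ is $\varepsilon$-close to its projection onto eigenfunctions with eigenvalues of denominator dividing $m$.

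By (P3) choose $r$ with $q(r) \equiv 0 \pmod{m}$, and consider $n \in m\mZ + r$. Then $q(mn + r) \in m\mZ$ for all $n$, since $q(mn + r) \equiv q(r) \pmod m$ because $q$ has $\mZ$-coefficients. The polynomial $g(n) = q(mn + r)$ therefore kills the rational part up to $\varepsilon$.

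On the other hand, for the component orthogonal to rational eigenfunctions, spectral theory together with Theorem \ref{thm: corrected main} (well distribution in components of polynomial sequences $g(n)\alpha$ for $\alpha$ irrational, which hits only subtori along which characters average to zero) gives
\[
\frac{1}{|\Phi_N|} \sum_{n \in \Phi_N} U^{g(n)} h \to 0
\]
in $L^2$ for $h$ orthogonal to the rational Kronecker factor. One needs $\int e(g(n)\alpha)$ averaging to $0$ for non-torsion $\alpha$. This holds because $\mF(g\alpha)$ is a nontrivial $\Phi$-subtorus when $\alpha$ is irrational and $g$ is nonconstant, so the character $x \mapsto e(x)$ is nontrivial on it.

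Combining the two parts,
\[
\lim \frac{1}{|\Phi_N|} \sum_{n \in \Phi_N} \mu\bigl(A \cap T^{-g(n)}A\bigr) \ge \|P_m f\|^2 - O(\varepsilon) \ge \mu(A)^2 - O(\varepsilon) > 0.
\]
This yields the desired $n$.

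The main obstacle is the last step: showing rigorously that the averages of $e(g(n)\alpha)$ vanish for every non-torsion $\alpha$, uniformly enough to pass through the spectral measure. This is where the well distribution of $g(n)\alpha$ in the components of $\mF(g\alpha) + g(K)\alpha$ must be shown to force the averages to vanish, using that the character $e$ is nontrivial on $\mF$. We must also handle carefully the degenerate case where $q$ is constant, which (P3) forces to be $0$.
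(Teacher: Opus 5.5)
There is a genuine gap, and it cannot be closed: as stated, the theorem is false, because (P3) implies neither (P2) nor (P4).

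\textbf{The reduction of (P3)$\Rightarrow$(P2).} You reduce this to the claim that every finite-index subgroup of $\mZ$ contains some $m\mZ$ with $m\neq 0$. That claim is false. $(\mZ,+)$ is an infinite-dimensional $\F_p$-vector space, so its finite-index subgroups are kernels of arbitrary $\F_p$-linear maps onto finite groups. Moreover, $\mZ$ does not act on $\mZ/\Lambda$ by multiplication unless $\Lambda$ is an ideal, and $\bigcap_{a\in\mZ}(\Lambda:a)$ is an infinite intersection, not a finite one.

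This is not a technicality. Take $\F=\F_2$ and $q(n)=(tn+1)((t+1)n+1)=(t^2+t)n^2+n+1$.
\begin{itemize}
\item (P3) holds. For $m=t^a(t+1)^bm'$ with $\gcd(m',t^2+t)=1$, choose $k$ by CRT with $(t+1)k\equiv 1\pmod{t^a}$ and $tk\equiv 1\pmod{(t+1)^bm'}$. Then $m\mid q(k)$.
\item (P2) fails. The polynomial $\eta(n)=(t^2+t)n^2+n$ is additive, so $q(\mZ)=1+\eta(\mZ)$ is a coset of a subgroup. Also $1\notin\eta(\mZ)$, because $q$ has no root in $\mZ$. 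Take an $\F_2$-linear functional $\phi$ on $\mZ$ with $\phi(\eta(\mZ))=0$ and $\phi(1)=1$. Then $\Lambda=\ker\phi$ has index $2$ and $q(\mZ)\cap\Lambda=\es$.
\item (P4) fails, witnessed by $E=\Lambda$.
\item $\Lambda$ contains no nonzero ideal (by (P3)), so it is an explicit counterexample to your lemma.
\end{itemize}
The same mechanism works in every characteristic: take $q=\eta-c$ with $\eta$ additive and $c\in\bigcap_m(\eta(\mZ)+m\mZ)\setminus\eta(\mZ)$. Only (P2)$\Rightarrow$(P3) and (P4)$\Rightarrow$(P2) survive. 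For the latter, take $E=\Lambda$ directly, as the paper does. Your (P4)$\Rightarrow$(P3) with $E=m\mZ$ is true but does not close the cycle without the false lemma.

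\textbf{The spectral step.} Your ``(P2)$\Rightarrow$(P4)'' argument only ever invokes (P3), so it must also fail, and the example shows where. In characteristic $p$ it is not true that averages of $e(g(n)\alpha)$ vanish at irrational $\alpha$. A nontrivial $\Phi$-subtorus can lie inside $\ker e$: for $q(n)=n^p$ and $\alpha=\gamma^p$ with $\gamma$ irrational, $e(q(n)\alpha)=1$ for all $n$, since $p$th powers have no $t^{-1}$ coefficient.

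In the example, write $(-1)^{\phi(n)}=e(nw)$. Then $w$ is irrational, since $\Lambda$ contains no $m\mZ$, and $e(q(n)w)=e(\eta(n)w)\,e(w)=-1$ for every $n$. Consider $X=\mZ/\Lambda$ with $T(n)$ translation by $n+\Lambda$, and $A=\Lambda$. Then $1_A=\frac12(1+\chi)$, where $\chi$ is an eigenfunction at the irrational frequency $w$, hence orthogonal to your rational part. So $\mu(A\cap T(-q(n))A)=\frac14\bigl(1+e(q(n)w)\bigr)=0$ for all $n$. The irrational part contributes $-\mu(A)^2$, not $0$.

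\textbf{The paper's proof breaks at the same spot.} Its claim that $q(k)\equiv 0\pmod m$ forces $\mF_x+q(k)\cdot x=\mF_x$ is fine for rational $x$, but not at $x=w$. There $e\equiv 1$ on $\mF_w$, because $e=1$ on the orbit closure of $\eta(n)w$, while $e(q(k)w)=-1$. So the integrand on $D\setminus\{0\}$ equals $-1$ at $w$ rather than being nonnegative. Since (P4) implies (P2), any argument reaching (P4) must use (P2) itself, for subgroups that are not ideals. That is exactly what would be needed to control the phases $e(q(k)x)$ at irrational $x$ with $e|_{\mF_x}\equiv 1$.
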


\begin{proof}
	(P2)$\implies$(P3).
	For this implication, it suffices to note that $m\mZ$ is a subgroup of finite index in $\mZ$ for every $m \in \mZ \setminus \{0\}$. \\
	
	(P3)$\implies$(P4).
	We use the method of proof from \cite[Section 9]{bl}.
	Let $E \subseteq \mZ$ be a set of positive density.
	By the Furstenberg correspondence principle (see, e.g., \cite[Theorem 4.17]{etdp}), there exists a probability space $(X, \mu)$, a measurable subset $A \subseteq X$ with $\mu(A) > 0$, and an action of $(\mZ,+)$ by measure-preserving transformations $T(n) : X \to X$, $n \in \mZ$, such that for every $n \in \mZ$, the set $E \cap (E - n)$ has density bounded from below by $\mu(A \cap T(-n)A)$.
	Thus, in order to establish (P4), it suffices to prove $\mu(A \cap T(-q(m))A) > 0$ for some $m \in \mZ$.
	We will prove the stronger claim that for every $\eps > 0$, there exists $k, m \in \mZ$ such that for any F{\o}lner sequence $(\Phi_N)_{N \in \N}$ in $\mZ$, one has
	\begin{equation*}
		\lim_{N \to \infty} \frac{1}{|\Phi_N|} \sum_{n \in \Phi_N} \mu(A \cap T(-q(mn+k)A)) > \mu(A)^2 - \eps.
	\end{equation*}
	This ergodic-theoretic claim bears a strong resemblance to \cite[Theorem 9.3]{bl}, and we will prove it in exactly the same way.
	
    Put $f = 1_A$, and let $\lambda$ be the spectral measure of $f$ so that the closure of $T(\mZ)f$ in $L^2(\mu)$ can be replaced by $L^2(\mT,\lambda)$ with the operators $T(w)$, $w \in \mZ$, being represented by multiplication by $e(w \cdot x)$ and $f$ being represented by the constant function $1$.
	Then $\tilde{f} = \lim_{N \to \infty} \frac{1}{|\Phi_N|} \sum_{n \in \Phi_N} T(n)f$ is represented by $1_{\{0\}} \in L^2(\mT,\lambda)$.
	Hence, by the mean ergodic theorem, $\lambda(\{0\}) = \|\tilde{f}\|_{L^2(\mu)}^2 \ge \mu(A)^2$.
	
	Now, $\mu(A \cap T(-w)A) = \int_{\mT} e(w \cdot x)~d\lambda(x)$.
	Fix $x \in \mT$.
	By \cite[Theorem 8.1]{bl}, there exists $m_x \in \mZ$ such that if $m \in \mZ$ is divisible by $m_x$, then the $\mZ$-sequence $q(mn+k) \cdot x$, $n \in \mZ$, is well distributed in a translated $\Phi$-subtorus $\mF_x + q(k) \cdot x \subseteq \mT$ for every $k \in \mZ$ with $\|k\| < \|m\|$.
	Using property (P3), there exists $k \in \mZ$ such that $q(k) \equiv 0 \pmod{m}$.
	Since $m\mZ \le \mZ$ is an ideal, the polynomial $q$ is well-defined mod $m$.
	In particular, we may choose $k$ such that $\|k\| < \|m\|$ and $q(mn+k) \equiv 0 \pmod{m}$ for every $n \in \mZ$.
	For this choice of $k$, the translated $\Phi$-subtorus $\mF_x + q(k) \cdot x$ coincides with the (untranslated) $\Phi$-subtorus $\mF_x$.
	Hence, the $\mZ$-sequence $e(q(mn+k) \cdot x)$, $n \in \mZ$, is well distributed in the subgroup $e(\mF_x)$ of the group $P$ of $p$th roots of unity in $\C$, so
	\begin{equation*}
		\lim_{N \to \infty} \frac{1}{|\Phi_N|} \sum_{n \in \Phi_N} e(q(mn+k) \cdot x) = \begin{cases}
			0, & \text{if}~e(\mF_x) = P; \\
			1, & \text{if}~e(\mF_x) = \{1\}.
		\end{cases}
	\end{equation*}
	
	Take $m \in \mZ$ sufficiently divisible (e.g., the product of all elements of degree at most $N$ for some large $N$) so that the set $D = \{x \in \mT : m_x \mid m\} \cup \{0\}$ satisfies $\lambda(\mT \setminus D) < \eps$, and let $k \in \mZ$ with $\|k\| < \|m\|$ such that $q(k) \equiv 0 \pmod{m}$.
	Then decomposing $\mT = \{0\} \cup (D \setminus \{0\}) \cup (\mT \setminus D)$ and applying the bounded convergence theorem, we may compute
	\begin{multline*}
		\lim_{N \to \infty} \frac{1}{|\Phi_N|} \sum_{n \in \Phi_N} \mu(A \cap T(-q(mn+k)A)) = \lim_{N \to \infty} \frac{1}{|\Phi_N|} \sum_{n \in \Phi_N} \int_{\mT} e(q(mn+k) \cdot x)~d\lambda(x) \\
		 = \int_{\mT} \lim_{N \to \infty} \frac{1}{|\Phi_N|} \sum_{n \in \Phi_N} e(q(mn+k) \cdot x)~d\lambda(x) \\
		 = \int_{\{0\}} \underbrace{\lim_{N \to \infty} \frac{1}{|\Phi_N|} \sum_{n \in \Phi_N} e(q(mn+k) \cdot x)~d\lambda(x)}_{=1} \\
		 + \int_{D \setminus \{0\}} \underbrace{\lim_{N \to \infty} \frac{1}{|\Phi_N|} \sum_{n \in \Phi_N} e(q(mn+k) \cdot x)~d\lambda(x)}_{\ge 0} \\
		 + \int_{\mT \setminus D} \underbrace{\lim_{N \to \infty} \frac{1}{|\Phi_N|} \sum_{n \in \Phi_N} e(q(mn+k) \cdot x)~d\lambda(x)}_{1\text{-bounded}} \\
		 \ge \lambda(\{0\}) + 0 - \lambda(\mT \setminus D) > \mu(A)^2 - \eps.
	\end{multline*}
	This proves the claim, so $q$ satisfies property (P4). \\
	
	(P4)$\implies$(P2).
	Let $\Lambda$ be a subgroup of finite index in $\mZ$.
	Then $\Lambda$ has density $\frac{1}{[\mZ : \Lambda]} > 0$, so by (P4), there exist $x,y \in \Lambda$ and $m \in \mZ$ such that $x - y = q(m)$.
	But $x - y \in \Lambda$, so we have $q(m) \in \Lambda$.
	That is, $q$ satisfies (P2).
\end{proof}

Combining Proposition \ref{prop: inequivalence} and Theorem \ref{thm: equivalences}, we see that the suggested equivalence between (P1) and (P4) in the remark following \cite[Theorem 9.5]{bl} does not hold.
The intent in \cite{bl} was to define a notion of intersectivity capturing the combinatorial property (P4) in terms of algebraic properties of polynomials, motivated by the the connection between (P4)  and the broader notion of \emph{intersective sets} in additive combinatorics \cite{ruzsa} (or equivalently, sets of \emph{measurable recurrence} in ergodic theory; see Subsection \ref{sec: additional consequences} below).
The property (P1) unfortunately falters in this mission, but both (P2) and (P3) serve as suitable replacements for defining the notion of intersectivity for polynomials $q(n) \in \mZ[n]$.

\subsection{Additional consequences of intersectivity} \label{sec: additional consequences}

The proof of the implication (P3)$\implies$(P4) reveals that intersective polynomials have significantly stronger combinatorial and recurrence properties than one may anticipate immediately from the formulation of (P4).
We enumerate several additional consequences in this subsection.
First, we introduce the relevant terminology.

\begin{definition} \label{definitions}
    Let $S \subseteq \mZ \setminus \{0\}$.
    We say that $S$ is
    \begin{itemize}
        \item \emph{syndetic} if there exists a finite set $F \subseteq \mZ$ such that $S + F = \{s+f : s \in S, f \in F\} = \mZ$;
        \item \emph{intersecitve} if for every set $E \subseteq \mZ$ of positive density, $(E - E) \cap S \ne \es$;
        \item \emph{chromatically intersective} if for every finite partition $\mZ = \bigcup_{i=1}^r C_i$, there exists $i \in \{1,\ldots, r\}$ such that $(C_i - C_i) \cap S \ne \es$;
        \item a \emph{set of measurable recurrence} if for every action of $\mZ$ by measure-preserving transformations $T(n)$, $n \in \mZ$, on a probability space $(X, \mu)$ and every measurable set $A \subseteq X$ with $\mu(A) > 0$, there exists $s \in S$ such that $\mu(A \cap T(-s)A) > 0$;
        \item a \emph{set of topological recurrence} if for every action of $\mZ$ by homeomorphisms $T(n)$, $n \in \mZ$, on a compact metric space $X$ and every nonempty open set $U \subseteq X$, there exists $s \in S$ such that $U \cap T(-s)U \ne \es$;
        \item a \emph{set of operatorial recurrence} if for every unitary action $U(n)$, $n \in \mZ$, of $\mZ$ on a Hilbert space $\mH$ and every $f \in \mH$ with $Pf \ne 0$, where $P$ is the orthogonal projection onto the space of $U$-invariant elements, there exists $s \in S$ such that $\left\langle U(s)f, f \right\rangle \ne 0$;
        \item a \emph{van der Corput set} if for every F{\o}lner sequence $(\Phi_N)_{N \in \N}$ in $\mZ$, if $u(n)$, $n \in \mZ$, is a bounded sequence of complex numbers and
        \begin{equation*}
            \lim_{N \to \infty} \frac{1}{|\Phi_N|} \sum_{n \in \Phi_N} u(n+s) \overline{u(n)} = 0
        \end{equation*}
        for every $s \in S$, then
        \begin{equation*}
            \lim_{N \to \infty} \frac{1}{|\Phi_N|} \sum_{n \in \Phi_N} u(n) = 0.
        \end{equation*}
    \end{itemize}
\end{definition}

Let us describe the relations between the terms defined in Definition \ref{definitions}.

As shown in \cite[Theorem 2.2]{bm}, intersective sets and sets of measurable recurrence are equivalent notions.
Similarly, chromatically intersective sets are the same as sets of topological recurrence (see \cite[Theorem 2.6]{bm}).
Given a finite partition $\mZ = \bigcup_{i=1}^r C_i$, at least one of the sets $C_i$ must be of positive density, so every intersective set (set of measurable recurrence) is chromatically intersective (a set of topological recurrence).

It was shown in \cite{ftd, saul} that the notion of a van der Corput set does not depend on the choice of F{\o}lner sequence $(\Phi_N)_{N \in \N}$.
Moreover, by \cite[Theorem 3.5]{ftd}, a set $S \subseteq \mZ \setminus \{0\}$ is a van der Corput set if and only if it is a set of operatorial recurrence (in the context of $\Z$-actions, this equivalence was earlier demonstrated in \cite[Theorem 1.1]{peres} and rediscovered in \cite[Theorem 2]{nrs}).
By considering the unitary operators on $L^2(\mu)$ induced by a measure-preserving action $T(n)$, $n \in \mZ$, it is easy to see that every set of operatorial recurrence is a set of measurable recurrence.

A summary of the relations between notions defined in Definition \ref{definitions} is provided by the following diagram.

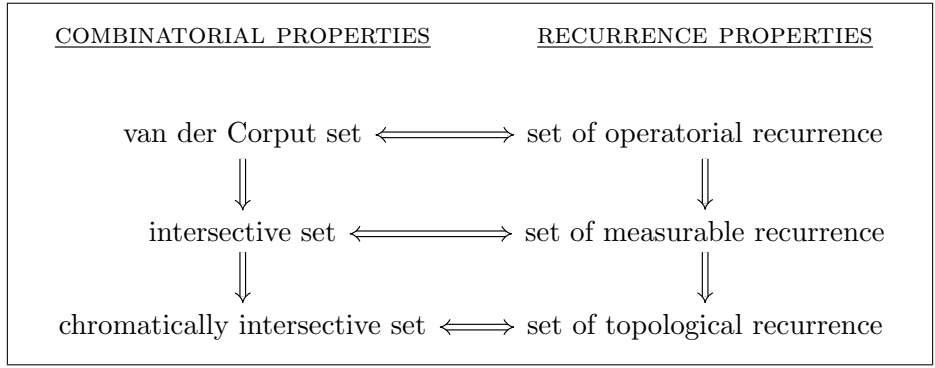
\begin{figure}[h]
    \begin{mdframed}[align = center, userdefinedwidth=32em]
    \begin{center}
	   \begin{tikzcd}
            \underline{\textsc{combinatorial properties}} & \underline{\textsc{recurrence properties}} \\
            \text{van der Corput set} \arrow[r, Leftrightarrow] \arrow[d, Rightarrow] & \text{set of operatorial recurrence} \arrow[d, Rightarrow] \\
            \text{intersective set} \arrow[r, Leftrightarrow] \arrow[d, Rightarrow] & \text{set of measurable recurrence} \arrow[d, Rightarrow] \\
            \text{chromatically intersective set} \arrow[r, Leftrightarrow] & \text{set of topological recurrence}
        \end{tikzcd}
    \end{center}
    \end{mdframed}
    \caption{Relations between combinatorial and recurrence properties in Definition \ref{definitions}.}
    \label{figure}
\end{figure}

The same implications hold for the corresponding notions in the integer setting.
Moreover, for subsets of $\Z$, the unidirectional implication arrows cannot be reversed: K\v{r}\'{i}\v{z} \cite{kriz} constructed an example of a set that is chromatically intersective but not intersective, and Bourgain \cite{bourgain} constructed an example of an intersective set that is not a van der Corput set.
In the function field context, the construction of K\v{r}\'{i}\v{z} was adapted by Forrest \cite[Theorem 3.7]{forrest} to show that there is a chromatically intersective subset of $\F_2[t]$ that is not intersective.
It is an interesting open problem whether K\v{r}\'{i}\v{z}'s result extends to function fields in odd characteristic.
There is also no known example of an intersective subset of $\mZ$ that is not a van der Corput set (see \cite[Problem 9]{le}). \\

The next theorem summarizes enhanced combinatorial and recurrence properties of intersective polynomials over $\mZ$.
Of particular note, we show that the set of values generated by an intersective polynomial is a van der Corput set, providing a full resolution to \cite[Conjecture 6.2]{llw}.
(The corresponding result in the integer setting was established by Kamae and Mend\`{e}s France \cite{kmf}. An extension to subsets of $\Z^d$ was obtained in \cite{bl_Z^d}.)

\begin{theorem}
    Let $q(n) \in \mZ[n]$ be a nonconstant polynomial.
    The following are equivalent:
    \begin{enumerate}[(i)]
        \item $q$ is intersective (i.e., $q$ satisfies any/all of (P2), (P3), and (P4));
        \item for any unitary action $U(n)$, $n \in \mZ$, of $\mZ$ on a Hilbert space $\mH$, any $f \in \mH$, and any $\eps > 0$, there exist $m,k \in \mZ$, $m \ne 0$, such that for any F{\o}lner sequence $(\Phi_N)_{N \in \N}$ in $\mZ$,
            \begin{equation*}
		          \lim_{N \to \infty} \frac{1}{|\Phi_N|} \sum_{n \in \Phi_N} \left\langle U(q(mn+k))f, f \right\rangle > \|Pf\|^2 - \eps,
	        \end{equation*}
            where $P$ is the orthogonal projection onto the space of $U$-invariant elements;
        \item $q(\mZ) \setminus \{0\}$ is a set of operatorial recurrence;
        \item $q(\mZ) \setminus \{0\}$ is a van der Corput set;
        \item for any action of $\mZ$ by measure-preserving trasnformations $T(n)$, $n \in \mZ$, on a probability space $(X, \mu)$, any measurable set $A \subseteq X$, and any $\eps > 0$, there exist $m, k \in \mZ$, $m \ne 0$, such that for any F{\o}lner sequence $(\Phi_N)_{N \in \N}$ in $\mZ$,
        \begin{equation*}
		          \lim_{N \to \infty} \frac{1}{|\Phi_N|} \sum_{n \in \Phi_N} \mu \left( A \cap T(-q(mn+k)) A \right) > \mu(A)^2 - \eps;
	        \end{equation*}
        \item for any action of $\mZ$ by measure-preserving trasnformations $T(n)$, $n \in \mZ$, on a probability space $(X, \mu)$, any measurable set $A \subseteq X$, and any $\eps > 0$, the set
        \begin{equation*}
            \left\{ n \in \mZ : \mu(A \cap T(-q(n))A) > \mu(A)^2 - \eps \right\}
        \end{equation*}
        is syndetic;
        \item for any $E \subseteq \mZ$ with $d^*(E) > 0$ and any $\eps > 0$, there exists $m, k \in \mZ$, $m \ne 0$, such that for any F{\o}lner sequence $(\Phi_N)_{N \in \N}$ in $\mZ$,
        \begin{equation*}
            \liminf_{N \to \infty} \frac{1}{|\Phi_N|} \sum_{n \in \Phi_N} d^* \left( E \cap (E - q(mn+k)) \right) > d^*(E)^2 - \eps;
        \end{equation*}
        \item for any $E \subseteq \mZ$ with $d^*(E) > 0$ and any $\eps > 0$, the set
        \begin{equation*}
            \left\{ n \in \mZ : d^* \left( E \cap (E - q(n)) \right) > d^*(E)^2 - \eps \right\}
        \end{equation*}
        is syndetic;
        \item for any finite partition $\mZ = \bigcup_{i=1}^r C_i$, there exists $i \in \{1, \ldots, r\}$ and $x,y,z \in C_i$ such that $x - y = q(z)$.
    \end{enumerate}
\end{theorem}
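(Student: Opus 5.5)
I will prove the chain of implications
\[
\text{(i)} \Rightarrow \text{(ii)} \Rightarrow \text{(iii)} \Leftrightarrow \text{(iv)}, \qquad \text{(ii)} \Rightarrow \text{(v)} \Rightarrow \text{(vii)}, \qquad \text{(v)} \Rightarrow \text{(vi)} \Rightarrow \text{(viii)} \Rightarrow \text{(ix)} \Rightarrow \text{(i)},
\]
so that every condition is shown to be equivalent to (i).

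\textbf{(i)$\Rightarrow$(ii).} This is the core step. I will repeat the spectral argument from the proof of (P3)$\Rightarrow$(P4) in Theorem \ref{thm: equivalences}, now for an arbitrary unitary action. Let $\lambda$ be the spectral measure of $f$ on $\mT$, so that $\langle U(w)f,f\rangle=\int e(w\cdot x)\,d\lambda(x)$ and $\lambda(\{0\})=\|Pf\|^2$ by the mean ergodic theorem.

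For each $x$, \cite[Theorem 8.1]{bl} supplies $m_x$. I will choose $m$ divisible by $m_x$ for all $x$ outside a set of $\lambda$-measure less than $\eps$; this is possible because the sets $\{x: m_x \mid m\}$ increase to $\mT$ along $m=\prod_{\deg\le N}$. Then (P3) gives $k$ with $\|k\|<\|m\|$ and $q(mn+k)\equiv 0 \pmod m$. On the good set each Weyl average tends to $0$ or $1$, hence is $\ge 0$. Bounded convergence then gives a limit of at least $\lambda(\{0\})-\eps$.

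\textbf{(ii)$\Rightarrow$(iii) and (iii)$\Leftrightarrow$(iv).} For (ii)$\Rightarrow$(iii): if $Pf\neq0$, take $\eps<\|Pf\|^2$. Then some term $\langle U(q(mn+k))f,f\rangle$ with $q(mn+k)\neq 0$ has positive real part. Since $q$ is nonconstant, the values $q(mn+k)$ equal zero for only finitely many $n$, and Følner averages ignore finitely many terms. The equivalence (iii)$\Leftrightarrow$(iv) is \cite[Theorem 3.5]{ftd}.

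\textbf{(ii)$\Rightarrow$(v)$\Rightarrow$(vii).} Take the Koopman representation with $f=1_A$; then $\|Pf\|^2=\|E(1_A\mid\mathcal I)\|^2\ge\mu(A)^2$, which gives (v). For (vii), apply a Furstenberg correspondence principle with $\mu(A)=d^*(E)$ and $d^*(E\cap(E-w))\ge\mu(A\cap T(-w)A)$; Fatou's lemma then handles the liminf.

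\textbf{(v)$\Rightarrow$(vi).} The limit in (v) holds uniformly over all Følner sequences. If the set $R$ in (vi) were not syndetic, its complement would contain arbitrarily large translates of every finite set. I will use this to build a Følner sequence of the form $\{mn+k: n\in\Phi_N\}$ avoiding $R$, for instance inside shifted balls $\{n:\deg n\le N\}+c_N$. Along that sequence the averages would be $\le\mu(A)^2-\eps$, contradicting (v). The same argument with (vii) gives (viii); alternatively, (viii) follows directly from (vi) via correspondence.

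\textbf{(viii)$\Rightarrow$(ix)$\Rightarrow$(i).} For (viii)$\Rightarrow$(ix): some cell $C_i$ has $d^*(C_i)>0$, so the set of $n$ with $C_i\cap(C_i-q(n))\neq\es$ is syndetic. I need a witness $z$ lying in $C_i$ itself, and I expect this to be the main obstacle. Knowing that the return set is syndetic only places some $z$ in $C_i$ if $C_i$ is large, which is not automatic. Instead I would prove (ix) directly from (ii) or (v) using topological, IP-type recurrence, or a Rado/Bergelson–Leibman style coloring argument: pass to a cell that is large, then use polynomial recurrence along $mn+k$ with $n$ ranging in that cell.

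For (ix)$\Rightarrow$(i): given $m\neq0$, color $\mZ$ by residues mod $m$. Then $x\equiv y\equiv z$, so $q(z)\equiv 0\pmod m$, which is (P3).
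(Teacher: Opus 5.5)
\textbf{The gap is (ix).} You never derive it, and the repair you sketch fixes the cell too early. Suppose you first fix one large cell $C_i$ and then take a return set $R_i$ attached to that cell, whether from (viii) or from recurrence along $mn+k$. Nothing then forces $R_i \cap C_i \ne \es$. A syndetic set can miss a set of positive density entirely (e.g.\ $R_i = 1 + t\mZ$, $C_i = t\mZ$), and return sets for different cells need not share an element.

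The missing idea is the coloring trick of \cite{density_schur}, which the paper uses to prove (v)$\Rightarrow$(ix) directly:
\begin{enumerate}[(1)]
\item Fix a F{\o}lner sequence $(\Phi_N)$ and let $L$ be the set of $i$ with $\overline{d}_{(\Phi_N)}(C_i) > 0$.
\item Apply the correspondence principle to the product set $\prod_{i \in L} C_i \subseteq \mZ^L$ under the diagonal $\mZ$-action, using a product F{\o}lner sequence built from subsequences of $(\Phi_N)$. This gives a \emph{single} system and set $A$ that encode all large cells at once.
\item By (v)$\Rightarrow$(vi), the one set $R = \{n : \mu(A \cap T(-q(n))A) > \mu(A)^2/2\}$ is syndetic.
\item $R$ has positive lower density along $(\Phi_N)$, while the small cells have density zero. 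So $R$ meets some $C_i$ with $i \in L$.
\item For $z \in R \cap C_i$, the product structure shows that $C_i \cap (C_i - q(z))$ has positive upper density. This gives $x, y \in C_i$ with $x - y = q(z)$.
\end{enumerate}
This argument has to go through the measure-theoretic statements (v)/(vi), because it builds a $\mZ$-system from a subset of $\mZ^L$. Condition (viii) only concerns subsets of $\mZ$, so it cannot be used here.

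\textbf{Consequence for your scheme.} Your chain never returns to (i) from any of (ii)--(viii). Every path back goes through the unproved (viii)$\Rightarrow$(ix), and (iii), (iv) have no outgoing arrow at all. These return arrows are easy and you should state them directly.
\begin{itemize}
\item (viii) with $\eps < d^*(E)^2$ produces some $n$ with $E \cap (E - q(n)) \ne \es$, which is (P4).
\item A set of operatorial recurrence is a set of measurable recurrence: apply it to $1_A \in L^2(\mu)$, noting $\|P1_A\|^2 \ge \mu(A)^2 > 0$. Hence it is intersective, which gives (iii)$\Rightarrow$(P4), and via (iii)$\Leftrightarrow$(iv) also (iv)$\Rightarrow$(P4).
\end{itemize}

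The remaining steps are correct and match the paper's reasoning:
\begin{itemize}
\item the spectral argument for (i)$\Rightarrow$(ii);
\item discarding the finitely many zeros of $q(mn+k)$ for (ii)$\Rightarrow$(iii);
\item the Koopman step for (ii)$\Rightarrow$(v);
\item pulling back shifted balls in the thick complement under $n \mapsto mn+k$ for (v)$\Rightarrow$(vi) and (vii)$\Rightarrow$(viii).
\end{itemize}
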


\begin{proof}
    We carry out the proof by establishing the equivalences displayed in Figure \ref{fig: implications}.
    
    \begin{figure}[h]
        \begin{mdframed}[align = center, userdefinedwidth=25em]
        \begin{center}
	       \begin{tikzcd}[scale cd=0.8]
            (P2) \arrow[r, Leftrightarrow, "\ref{thm: equivalences}"] & (P3) \arrow[r, Leftrightarrow, "\ref{thm: equivalences}"] \arrow[d, Rightarrow] & (P4) \\
            (ix) \arrow[u, Rightarrow] & (ii) \arrow[r, Rightarrow, "D" below] \arrow[dd, Rightarrow] & (iii) \arrow[r, Leftrightarrow, "G" below] & (iv) \arrow[ul, Rightarrow, "G" above right] \\
             & & (vi) \arrow[rd, Rightarrow, "C"] \\
             & (v) \arrow[uul, Rightarrow] \arrow[ur, Rightarrow, "S"] \arrow[dr, Rightarrow, "C" below left] & & (viii) \arrow[uuul, bend right=70, Rightarrow, "D" above right] \\
             & & (vii) \arrow[ur, Rightarrow, "S" below right]
        \end{tikzcd}
        \end{center}
        \end{mdframed}
        \caption{Implications demonstrated in the proof.}
        \label{fig: implications}
    \end{figure}
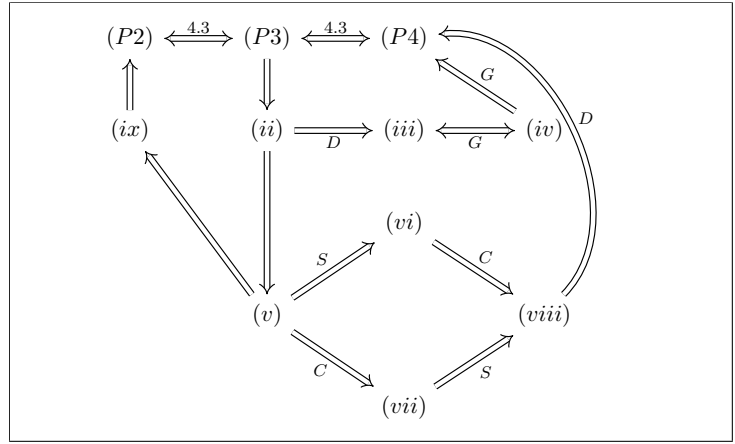
    
    The equivalences in the first row (between (P2), (P3), and (P4)) were established above in Theorem \ref{thm: equivalences}.
    Several implication arrows are labeled by letters to indicate the reason for the implication:
    \begin{itemize}
        \item ``C'' indicates that the implication holds by applying the Furstenberg correspondence principle (\cite[Theorem 4.17]{etdp}),
        \item ``D'' is used to label implications that follow immediately from the relevant definitions (sets of operatorial recurrence in the case of (ii)$\implies$(iii) and intersective sets in the case of (viii)$\implies$(P4)),
        \item ``G'' stands for a general implication between notions as illustrated in Figure \ref{figure}, and
        \item ``S'' means that one property can be deduced from the other using the fact that a set is syndetic if and only if it has nonempty intersection with some element of every F{\o}lner sequence (see \cite[Lemma 1.9]{abb}).
    \end{itemize}

    Let us now show the remaining chain of implications (P3)$\implies$(ii)$\implies$(v)$\implies$(ix)$\implies$(P2).

    (P3)$\implies$(ii).
    The argument used to prove the implication (P3)$\implies$(P4) in the proof of Theorem \ref{thm: equivalences} above applies equally well to arbitrary unitary actions to deduce (ii).

    (ii)$\implies$(v).
    Let $T(n)$, $n \in \mZ$, be an action of $\mZ$ by measure-preserving transformations on a probability space $(X, \mu)$ and $A \subseteq X$ a measurable set.
    Take $\mH = L^2(\mu)$, and let $U(n)f = f \circ T(n)$ for $n \in \mZ$.
    Then $U(n)$, $n \in \mZ$, is a unitary action of $\mZ$ on $\mH$.
    Therefore, given $\eps > 0$, there exist $m, k \in \mZ$, $m \ne 0$, such that for every F{\o}lner sequence $(\Phi_N)_{N \in \N}$ in $\mZ$,
    \begin{equation*}
        \lim_{N \to \infty} \frac{1}{|\Phi_N|} \sum_{n \in \Phi_N} \langle U(q(mn+k))1_A, 1_A \rangle > \|P1_A\|^2 - \eps.
    \end{equation*}
    Now,
    \begin{equation*}
        \langle U(q(mn+k))1_A, 1_A \rangle = \mu \left( A \cap T(-q(mn+k))A \right)
    \end{equation*}
    and
    \begin{equation*}
        \|P1_A\|^2 \ge \mu(A)^2
    \end{equation*}
    so
    \begin{equation*}
        \lim_{N \to \infty} \frac{1}{|\Phi_N|} \sum_{n \in \Phi_N} \mu(A \cap T(-q(mn+k))A) \ge \mu(A)^2 - \eps.
    \end{equation*}

    (v)$\implies$(ix).
    We use the Furstenberg correspondence principle together with the ``coloring trick'' from \cite{density_schur}.
    This argument also appears in \cite[Section 8]{ab_rings}.
    Let $\mZ = \bigcup_{i=1}^r C_i$.
    Fix a F{\o}lner sequence $(\Phi_N)_{N \in \N}$ in $\mZ$.
    Let $L$ be the collection of ``large'' color classes
    \begin{equation*}
        L = \{i \in \{1, \ldots, r\} : \overline{d}_{(\Phi_N)}(C_i) > 0\}
    \end{equation*}
    and $S$ the ``small'' color classes
    \begin{equation*}
        S = \{i \in \{1, \ldots, r\} : \overline{d}_{(\Phi_N)}(C_i) = 0\}.
    \end{equation*}
    Let $E = \prod_{i \in L} C_i \subseteq \mZ^L$.
    For each $i \in L$, we may find a subsequence $(\Phi_{N_{i,k}})_{k \in \N}$ such that the limit
    \begin{equation*}
        d_{(\Phi_{N_{i,k}})}(C_i) = \lim_{k \to \infty} \frac{|C_i \cap \Phi_{N_{i,k}}|}{|\Phi_{N_{i,k}}|}
    \end{equation*}
    exists and is positive.
    Let $\Psi_k = \prod_{i \in L} \Phi_{N_{i,k}}$.
    Then $(\Psi_k)_{k \in \N}$ is a F{\o}lner sequence in $\mZ^L$ with $d_{(\Psi_k)}(E) > 0$.
    Applying the Furstenberg correspondence principle, there exists an action of $\mZ$ by measure-preserving transformations $T(n)$, $n \in \mZ$, on a probability space $(X, \mu)$ and a measurable set $A \subseteq X$ with $\mu(A) = d_{(\Psi_k)}(E) > 0$ such that
    \begin{equation*}
        \overline{d}_{(\Psi_k)} \left( E \cap (E - (n,\ldots,n)) \right) \ge \mu(A \cap T(-n)A)
    \end{equation*}
    for every $n \in \mZ$.
    By (v), the set
    \begin{equation*}
        R = \left\{ n \in \mZ : \mu(A \cap T(-q(n))A) > \frac{\mu(A)^2}{2} \right\}
    \end{equation*}
    is syndetic.
    In particular, $\underline{d}_{(\Phi_N)}(R) > 0$, so $R \cap C_i \ne \es$ for some $i \in L$.
    Let $z \in R \cap C_i$ with $i \in L$.
    Then
    \begin{equation*}
        \overline{d}_{(\Phi_N)}(C_i \cap (C_i - q(z))) \ge \overline{d}_{(\Psi_k)}(E \cap (E - (q(z), \ldots, q(z)))) \ge \mu(A \cap T(-q(n))A) > 0,
    \end{equation*}
    so there exist $x, y \in C_i$ such that $x - y = q(z)$.

    (ix)$\implies$(P2).
    Suppose (ix) holds.
    Given a subgroup $\Lambda \le \mZ$ of finite index, we may consider the partition $\mZ = \bigcup_{i=1}^r (\Lambda + n_i)$ into cosets of $\Lambda$ and deduce that there exists $z \in \mZ$ such that $q(z) \in \Lambda$, so $q$ satisfies (P2).
    This completes the proof.
\end{proof}


\end{document}